\newtheoremstyle{mystyle}{}{}{\rmfamily}%
{}{\normalfont\bfseries}{ }{ }{} 
\newtheorem{theorem}{Theorem}[section]
\newtheorem{proposition}[theorem]{Proposition}
\newtheorem{lemma}[theorem]{Lemma}
\theoremstyle{mystyle}
\newcommand{\R}{\mathbb{R}}
\newcommand{\N}{\mathbb{N}}
\renewcommand{\H}{\mathbb{H}}
\newcommand{\cZ}{\mathcal{Z}}
\newcommand{\cF}{\mathcal{F}}
\newcommand{\cW}{\mathcal{W}}
\newcommand{\cL}{\mathcal{L}}
\newcommand{\cP}{\mathcal{P}}
\newcommand{\cK}{\mathcal{K}}
\newcommand{\cH}{\mathcal{H}}
\newcommand{\cV}{\mathcal{V}}
\newcommand{\te}{{\theta}}
\newcommand{\Sig}{{\Sigma}}
\newcommand{\1}{\mathbf{1}}
\newcommand{\with}{:} 
\DeclareMathOperator{\supp}{supp}
\DeclareMathOperator{\id}{id}
\begin{document} 
\selectlanguage{english}
\vspace*{1cm}

\begin{center}
\setstretch{2}
{\huge \bfseries\sffamily An extension of Kesten's criterion for amenability to topological Markov chains}\\

\bigskip \bigskip 
\singlespacing
Manuel Stadlbauer\\ 
\bigskip
\textit{\small 
Departamento de Matemática,\\ 
Universidade Federal da Bahia,\\
Av. Ademar de Barros s/n,\\
40170-110 Salvador (BA), Brasil\\
E-mail: {manuel.stadlbauer@ufba.br}}\\
\bigskip
{\small \today}
\end{center}

\bigskip
\begin{quote} 
\noindent \small \textbf{Abstract.} The main results of this note extend a theorem of Kesten for symmetric random walks on discrete groups to group extensions of topological Markov chains. In contrast to the result in probability theory, there is a notable asymmetry in the assumptions on the base. That is, it turns out that, under very mild assumptions on the continuity and symmetry of the associated potential, amenability of the group implies that the Gurevi\v{c}-pressures of the extension and the base coincide whereas the converse holds true if the potential is Hölder continuous and the topological Markov chain has big images and preimages. Finally, an application to periodic hyperbolic manifolds is given.\\[-.3cm]

\noindent \textbf{Keywords.} Amenability, Group extension, Topological Markov chain, Thermodynamic formalism, Periodic manifold\\ 
\noindent \textbf{MSC 2000.} 37A50, 37C30, 20F69
\end{quote}

\section{Introduction and statement of main results}

The motivation for the analysis of the change of pressure under group extensions stems from the attempt to relate two  classical results from probability theory and geometry on the amenability of discrete groups. The probabilistic result was obtained by Kesten in \cite{Kesten:1959a} and characterises amenability in terms of the spectral radius of the Markov operator associated to a symmetric random walk, that is, a group $G$ is amenable if and only if the spectral radius of the operator acting on $\ell^2(G)$ is equal to 1. The following counterpart in geometry was discovered by Brooks (\cite{Brooks:1985}) using a completely different method. Assume that $G$ is a Kleinian group acting on hyperbolic space $\H^{n+1}$ with exponent of convergence $\delta(G)$ bigger than $n/2$ and that $N \lhd G$ is a normal subgroup. Then the bottoms of the spectra of the Laplacians on $\H/G$ and $\H/N$ are equal if and only if $G/N$ is amenable. Or equivalently, using the characterisation of the bottom of the spectrum in terms of the exponents of convergence, $G/N$ is amenable if and only if 
 $\delta(G)=\delta(N)$. More recently, these results were partially improved. Roblin (\cite{Roblin:2005}) used conformal densities to prove that amenability implies $\delta(G)=\delta(N)$ if $G$ is of divergence type and Sharp obtained in \cite{Sharp:2007} the same statement for convex-cocompact Schottky groups using Grigorchuk's results on the co-growth of shortest representations (see \cite{Grigorchuk:1980}) applied to the Cayley graph of $G$.
  
In here, we consider group extensions of a topological Markov chain for a given potential function. That is, for a topological Markov chain $(\Sig_A,\theta)$, a potential $\varphi:\Sig_A \to (0,\infty)$ and a map $\psi: \Sig_A \to H$ from $\Sig_A$ to a discrete group $H$, the group extension of $(\Sig_A,\theta,\varphi)$ by $\psi$ is defined by 
\[ T: \Sig_A \times H \to \Sig_A \times H, (x,g) \mapsto (\theta(x),g \psi(x))\]
and the lifted potential by $\hat\varphi: \Sig_A \times H \to \R, (x,g) \mapsto \varphi(x)$, where it is throughout assumed that $\psi$ is constant on the states $\cW^1$ of $\Sig_A$. This then gives rise to a natural notion of symmetry through the existence of an involution $\cW^1 \to \cW^1$, $w \mapsto w^\dagger$ such that  $\psi({[w^\dagger]}) = \psi({[w]})^{-1}$ for all states $w \in \cW^1$, where $[w]$ refers to the cylinder associated to $w$.  This involution extends to finite words, leading to the notion of a weakly symmetric potential by requiring that there exists a sequence $(D_n)$ with $\lim_n D_n^{1/n} =1$ such that 
\[  \sup_{x \in [w], y \in [w^\dagger]} \frac{\prod_{j=0}^{n-1}(\varphi\circ\te^j(x))}{\prod_{j=0}^{n-1}(\varphi\circ\te^j(y))} \leq D_n  ,\]
for all $w \in \cW^n$ and with $\cW^n$ referring to the words of length $n$ (see Section \ref{sec:Extensions by amenable groups} for details). Note that this general framework establishes a connection between random walks on groups and the geodesic flow on the unit tangent bundle of $\H/N$ for a certain class of Kleinian groups $G$, since random walks can be recovered by assuming that $\Sig_A$ is a symmetric full shift equipped with a locally constant, symmetric potential whereas the relation to the geodesic flow is obtained through a group extension of the coding map associated with $G$ as considered, e.g.,  in \cite{AaronsonDenker:1999} or \cite{LedrappierSarig:2008}.  

The main results in here extend Kesten's result for random walks to group extensions by replacing statements on the spectral radius by statements on the Gurevi\v{c} pressure $P_G$. Furthermore, they reveal a certain asymmetry with respect to the method of proof and the requirements on the mixing properties of the base transformation $\te$.
The first result, Theorem \ref{theo:Kesten_pt1}, essentially states that, if the potential is weakly symmetric and the group $H$ is amenable, then $P_G(T)=P_G(\te)$. 
This result is a consequence of Kesten's result, since the assumptions give rise to a construction of self-adjoint operators $P_n$ on $\ell^2(H)$, for each $n \in \N$, whose spectral radii have to be equal to $\exp(n P_G(\te))$ as a consequence of Kesten's theorem and the amenability of $H$. 
The arguments for obtaining the converse statement in Theorem \ref{theo:Kesten_reverse} are more intricate and require that the potential is H\"older continuous and summable and that $\te$ has the big images and preimages property. In this situation, we then have that $P_G(T)=P_G(\te)$ implies that $H$ is amenable. The proof is inspired by an argument of Day in \cite{Day:1964} and relies on a careful analysis of the action of the Ruelle operator on the embedding of $\ell^2(H)$ into a certain subspace of $C(\Sig_A \times H)$. 

Finally, it is worth noting that the results of Kesten and Grigorchuk mentioned above can be easily deduced from Theorems \ref{theo:Kesten_pt1} and \ref{theo:Kesten_reverse} by choosing $\Sig_A$ to be a shift in the generators and $\varphi$ to be a suitable, locally constant, symmetric potential. Furthermore, a reformulation in terms of Gibbs-Markov maps reveals that the results in here might be seen as an extension of Kesten's theorem for random walks on groups with independent, identically distributed increments to random walks with stationary, exponentially $\psi$-mixing increments. This generalisation then allows to apply the results to normal covers of Kleinian groups. That is, the theorem of Brooks extends to the class of essentially free Kleinian groups as defined below and, in particular, to a class of Kleinian groups with parabolic elements of arbitrary rank. 

While writing this article, related results for locally constant potentials were independently obtained by Jaerisch (\cite{Jaerisch:2011}) where it is shown that under this restriction, Theorem \ref{theo:Kesten_reverse} is a consequence of a version of Kesten's result for graphs in \cite{OrtnerWoess:2007}.

\section{Topological Markov chains}

For a countable alphabet $I$ and a matrix $A= (a_{ij}:\; i,j \in I)$ with $a_{ij}\in \{0,1\}$ for all $i,j \in I$ and $\sum_{j} a_{ij} >0$ for all $i\in I$, let the pair $(\Sig_A, \te)$ denote the associated one-sided topological Markov chain. That is, 
\begin{align*}
\Sig_A& := \left\{(w_k:\;k=0,1,\ldots)\with w_k \in I, a_{w_k w_{k+1}}= 1 \;\forall i = 0,1, \ldots  \right\},\\
\te &: \Sig_A \to \Sig_A, \te:(w_k:\;k=1,2,\ldots) \mapsto (w_k:\;k=2,3,\ldots).
\end{align*} 
A finite sequence $w=(w_1\ldots w_n)$ with $n \in \N$, $w_k \in I$ for $k=1,2, \ldots, n$ and $a_{w_k w_{k+1}}= 1$ for $k=1,2, \ldots, n-1$ is referred to as a \emph{word of length $n$}, and the set 
\[[w]:= \{(v_k)\in \Sig_A \with w_k=v_k\; \forall k=1,2,\ldots, n\}\]
as a \emph{cylinder of length $n$}. The \emph{set of admissible words of length $n$} will be denoted by $\cW^n$, the length of $w \in \cW^n$ by $|w|$ and the set of all admissible words by $\cW^\infty = \bigcup_n \cW^n$. 
Furthermore, since $\te^n: [w]\to \te^n([w])$ is a homeomorphism, the inverse exists and will be denoted by $\tau_w: \te^n([w]) \to [w]$. 
 For  $a,b \in \cW^\infty$ and $n \in \N$ with $n \geq |a|$ , set
\begin{align*}
\cW_{a,b}^n &= \{(w_1 \ldots w_n)\in \cW^n \with (w_1\ldots w_{|a|}) =a,\; w_nb \hbox{ admissible}\}.
\end{align*}
As it is well known, $\Sig_A$ is a Polish space with respect to the topology generated by cylinders, and $\Sig_A$ is compact and locally compact with respect to this topology if and only if $I$ is a finite set. 
Furthermore, recall that $\Sig_A$ 
is called \emph{topologically transitive} if for all $a,b \in I$, there exists $n_{a,b}\in \N$ such that  $\cW_{a,b}^{n_{a,b}} \neq \emptyset$ and that $\Sig_A$ 
is called \emph{topologically mixing} if for all $a,b \in I$, there exists $N_{a,b}\in \N$ such that $\cW_{a,b}^n \neq \emptyset$ for all $n \geq N_{a,b}$. 
Moreover, a topological Markov chain is said to have \emph{big images} or \emph{big preimages}
if there exists a finite set $\mathcal{I}_{\textrm{\tiny bip}}  \subset \cW$ 
such that for all $v \in \cW$, there exists $\beta \in \mathcal{I}_{\textrm{\tiny bip}} $ such that $(v \beta) \in \cW^2$ or 
$(\beta v) \in \cW^2$, respectively. Finally, a topological Markov chain is said to have the \emph{big images and preimages (b.i.p.) property}  if the chain is topologically mixing
and has big images and preimages (see \cite{Sarig:2003a}). Note that the b.i.p. property coincides with the notion of finite irreducibility for topological mixing topological Markov chains as introduced by Mauldin and Urbanski (\cite{MauldinUrbanski:2001}).

We now consider a pair $(\Sig_A,\varphi)$ where $\varphi: \Sigma_A\to \R$ is a strictly positive function which we refer to as a \emph{potential}. For $n \in \N$ and $w \in \cW^n$, set 
\begin{equation}\label{def:bounded_distortion}
\Phi_n := \prod_{k=0}^{n-1} \varphi \circ \te^k \hbox{ and } C_w:= \sup_{x,y \in [w]} \Phi_n(x)/\Phi_n(y).\end{equation}
The potential  $\varphi$ is said to have \emph{(locally) bounded variation} if $\varphi$ is continuous and there exists $C>0$ such that  $C_w\leq C$ for all $n \in \N$ and $w \in \cW^n$, and is called potential of \emph{medium variation} if $\varphi$ is continuous and, for all $n \in \N$, there exists $C_n>0$ with  $C_w\leq C_n$ for all $w \in \cW^n$ and $ \lim_{n\to \infty} \sqrt[n]{C_n} =1$. 
For positive sequences $(a_n),(b_n)$ we frequently will write $a_n \ll b_n$ if there exists $C>0$ with $a_n \leq C b_n$ for all $n \in \N$, and $a_n \asymp b_n$ if $a_n \ll b_n \ll a_n$. 
A further, stronger assumption on the variation is related to local Hölder continuity. Therefore, recall that the $n$-th variation of a function $f: \Sig_A \to \R$ is defined by
\[V_n(f)=\sup\{ |f(x)-f(y)|: x_i=y_i,\, i=1,2,\ldots,n \}.\]
The function $f$ is referred to as a \emph{locally H\" older continuous function} if there exists
$0<r<1$ and $C\geq 1$ such that $V_n(f)\ll r^n$ for all $n\geq 1$. Moreover, we refer to a locally H\" older continuous function with $\|f\|_\infty < \infty$ as a H\" older continuous function. We now recall the following well-known estimate.
For $n \leq m$, $x,y \in [w]$ for some $w \in \cW^m$, and a locally Hölder continous function $f$,  
\begin{align} \label{eq:Hoelder_estimate}
\big|\sum_{k=0}^{n-1} f\circ \te^k(x) - f\circ \te^k(y) \big| 
\ll  \frac{1}{1-r} r^{m-n}. 
\end{align}
In particular, the function $\exp f$ is a potential of bounded variation. For a given potential $\varphi$, the basic objects of thermodynamic formalism are partition functions. Since the state space might be countable, we consider partition functions $Z_a^n$ for a fixed $a \in I$ which are  defined by
\[Z_a^n := \sum_{ \te^n(x) =x,\; x \in [a]} \Phi_n(x).\]
Furthermore, we refer to the exponential growth rate of $Z_a^n$, that is to 
\[P_G(\te,\varphi) :=  \limsup_{n \to \infty} \log \sqrt[n]{Z_a^n} = \limsup_{n \to \infty} \frac{1}{n} \log {Z_a^n},\]
as the \emph{Gurevi\v{c} pressure} of $(\Sig_A,\te,\varphi)$. This notion was introduced in \cite{Sarig:1999} for topologically mixing systems where $\log \varphi$ is locally Hölder continuous. 
If $(\Sig_A,\te,\varphi)$ is transitive and $\varphi$ is of medium variation, arguments in there combined with the decomposition of $\te^p$ into mixing components, where  $p$ stands for the period of $(\Sig_A,\te)$, show that $P_G(\te,\varphi)$ is independent of the choice of $a$ and that 
\[P_G(\te,\varphi)=\lim_{n\to \infty,\; \cW^n_{a,a} \neq \emptyset}\frac{1}{n} \log {Z_a^{n}}.\]
Furthermore, it is easy to see that $P_G(\te,\varphi)$ remains unchanged by replacing $a \in \cW^1$ with some  $a \in \cW^n$. 
Also recall that, if $\log \varphi$ is Hölder continuous and the system is topologically mixing, then a variational principle holds (see \cite{Sarig:1999}). 

We now recall the definitions of conformal and Gibbs measures related to a given potential $\varphi$.  A Borel probability measure $\mu$ is called \emph{$\varphi$-conformal} if 
\[ \mu(\te(A)) = \int_A \frac{1}{\varphi}d\mu  \] 
 for all Borel sets $A$ such that $\te|_A$ is injective. For $(w_1\ldots w_{n+1}) \in \cW^{n+1}$ and a potential of medium variation, it then immediately follows that  
\begin{equation}\label{eq:conformal_estimate} C_n^{-1} \mu(\te([w_{n+1}])) \leq \frac{\mu([w_1\ldots w_{n+1}])}{\Phi_n(x)} \leq C_n \mu(\te([w_{n+1}]))\end{equation} 
for all $x \in [w_1\ldots w_{n+1}]$. Note that this estimate implies that $P_G(\te,\varphi)=0$ is a necessary condition for the existence of a conformal measure  with respect to a potential of medium variation. Moreover, the above estimate motivates the following definitions. Assume that there exists a sequence $(B_n:n \in \N)$ with $B_n \geq 1$ such that
\begin{equation}\label{eq:Gibbs} B_n^{-1}\leq \frac{\mu([w])}{\Phi_n (x)}<B_n \end{equation} 
for all $n \in \N$, $w \in \cW^n$ and $x \in [w]$. If $\sup_n B_n < \infty$, then $\mu$ is called \emph{$\varphi$-Gibbs measure}, and if $\lim_{n\to \infty} B_n^{1/n}=1$, then $\mu$ is called \emph{weak $\varphi$-Gibbs measure}. In order to introduce a further basic object, the Ruelle operator, we define the action of the inverse branches of $\tau_v$ on functions as follows. For $v \in \cW^n$ and $f : \Sigma_A \to \R$, set 
\[f\circ \tau_v : \Sig_A \to \R, \  x \mapsto \1_{\te^n([v])}(x) f(\tau_v(x)),\]
that is $f\circ \tau_v(x) := f(\tau_v(x))$ for  $x \in \te^n([v])$ and $f\circ \tau_v(x) := 0$ for  $x \notin \te^n([v])$. The \emph{Ruelle operator} is then defined by 
\[ L_\varphi (f) = \sum_{v \in \cW^1} \varphi\circ\tau_v \; \cdot \;f\circ\tau_v, \]
where $f : \Sigma_A \to \mathbb{C}$ is an element of an appropriate function space such that the possibly infinite sum on the right hand side is well defined.

\section{Group extensions of topological Markov chains} \label{sec:Extensions by amenable groups}
To introduce the basic object of our analysis, fix a countable discrete  group $G$ and a map $\psi: \Sig_A \to G$ such that $\psi$ is constant on $[w]$ for all $w \in \cW^1$. Then, with $X:= \Sig_A \times G$ equipped with the product topology, 
the \emph{group extension or $G$-extension} $(X,T)$ of $(\Sig_A, \te)$ is defined by
\[ T: X\to X, (x,g) \mapsto (\te x, g \psi(x)).\]
Observe that $(X,T)$ also is a topological Markov chain and that its cylinder sets are given by $[w,g]:=[w]\times \{g\}$, for $w \in \cW^\infty$ and $g \in G$. Furthermore, set $X_g:= \Sig_A\times \{g\}$ and  
 \[\psi_{n}(x) := \psi(x)\psi(\te x) \cdots \psi(\te^{n-1}x)\]
  for $n \in \N$ and $x \in \Sig_A$. Observe that $\psi_{n}:\Sig_A \to G$ is constant on cylinders of length $n$ and, in particular, that $\psi(w) := \psi_{n}(x)$, for some $x \in [w]$ and $w \in \cW^n$, is well defined. Moreover, for $a,b \in \cW^1$ and  $n \in \N$, set
\[G_n(a,b):= \{ \psi(w) \with n \in \N, w\in \cW^n, [a]\supset[w], \te^n([w]) \supset [b]\}.\]
 Note that $(X,T)$ is {topologically transitive} if and only if, for $a,b \in \cW^1$, $g\in G$, there exists $n \in \N$ with $g\in G_n(a,b)$, and that $(T,X)$ is  \emph{topologically mixing} if and only if,  for $a,b \in \cW^1$ and  $g\in G$, there exists $N \in \N$ (depending on $a,b,g$) such that  $g\in G_n(a,b)$ for all $n>N$. Note that the base transformation $(\te,\Sig_A)$ of a topologically transitive group extension has to be topologically transitive, and topologically mixing if the extension is topologically mixing, respectively. Furthermore, if  $(T,X)$ is topologically transitive, then $\{\psi(a)\with a \in \cW^1\}$ is a generating set for $G$. 

Throughout, we now fix a topological mixing topological Markov chain $(\Sig_A, \te)$, and a topological transitive $G$-extension $(T,X)$. Furthermore, we fix a (positive) potential $\varphi:\Sig_A \to \R$ with $P_G(\te,\varphi)=0$. Note that $\varphi$ lifts to a potential  $\hat\varphi$ on $X$ by setting $\hat\varphi(x,g):= \varphi(x)$. For ease of notation, we will not distinguish between $\hat\varphi$ and $\varphi$. Moreover, for $v \in \cW^\infty$, the inverse branch given by $[v,\cdot]$ will be as well denoted by $\tau_v$, that is $\tau_v(x,g) := (\tau_v(x),g\psi(v)^{-1})$.
In order to distinguish between the Ruelle operator and the partition functions with respect to $\te$ and $T$, these objects for the group extension will be written in calligraphic letters, that is, for $a \in \cW$, $\xi \in [a]\times \{\id\}$, $(\eta,g)\in X$, and $n \in \N$,  
\begin{align*}
\cL(f)(\xi,g)  := \sum_{v \in \cW} \varphi\circ\tau_v(\xi) f\circ\tau_v(\xi,g), \quad 
\cZ_{a,g}^{n} := \sum_{ \genfrac{}{}{0pt}{}{T^n(x,g) =(x,g),}{x \in [a]}} \Phi_n(x).
\end{align*}

\section{Extensions by amenable groups} 
In this section, we show that the Gurevi\v{c} pressure remains unchanged under extension by an amenable group. In particular, it will turn out that this statement is true under very mild conditions. Also note that a similar result was proven in \cite{Sharp:2007} for 
extensions of subshifts of finite type with respect to a hyperbolic potential.
We first recall the definition of amenability using F\o lner's condition (see \cite{Folner:1955}). That is, $G$ is referred to as an \emph{amenable group} if and only if there exists a sequence $(K_n)$ of finite subsets of $G$ with $\bigcup_n K_n = G$ such that 
  \[ \lim_{n \to \infty} |g K_n \triangle K_n|/ |K_n|=0 \quad \forall g \in G.\]
In here, $ \triangle$ refers to the symmetric difference, and $|\cdot|$ to the cardinality of a set. We are now interested in the characterisation of amenability in terms of the  Gurevi\v{c} pressure of a group extension. As it  will turn out, this is an extension of the following result of Kesten in \cite{Kesten:1959a}.  Let $m$ be a probability measure on $G$ with $m(g^{-1}) = m(g)$ and assume that the support $\supp(m)$ of $m$ is a generating set for $G$. Then the spectral radius of the operator $P$ on the complex space $\ell^2(G)$ given by $Pf(\gamma) := \sum_{g \in G} f(\gamma g^{-1}) m(g)$ is equal to one if and only if $G$ is amenable.

 In order to obtain an extension of this result to shift spaces, one has to consider group extensions with symmetry. Namely, we say that $(\Sig_A,\te,\psi)$ is \emph{symmetric} if there exists  $\cW^1 \to \cW^1$, $w \mapsto w^\dagger$ with the following properties.
\begin{enumerate}
  \item  For $w \in \cW^1$, $(w^\dagger)^\dagger=w$.
  \item  \label{def:2} For $v,w \in \cW^1$, the word $(vw)$ is admissible if and only if $(w^\dagger v^\dagger)$ is admissible.
  \item  $\psi(v^\dagger) = \psi(v)^{-1}$ for all $v \in \cW^1$.
\end{enumerate}
Observe that this notion of symmetry can be extended to an involution of $\cW^\infty$ by defining $(w_1\ldots w_n)^\dagger := (w_n^\dagger\ldots w_1^\dagger)$. We refer to $\varphi$ as a \emph{weakly symmetric potential} if $\varphi$ is continuous and there exists a sequence $(D_n)$ with $ \lim_{n\to \infty} \sqrt[n]{D_n} =1$ such that, for all $n \in \N$ and $w  \in \cW^n$,  
\[ \sup_{x \in [w], y \in [w^\dagger]} \Phi_n(x)/\Phi_n(y) \leq D_n.\]
 If $\sup D_n < \infty$, then $\varphi$ is referred to as a \emph{symmetric potential}. Note that a weakly symmetric  or symmetric potential is necessarily of medium variation with respect to $C_n:= D_n^2$ or of bounded variation, respectively.

We now prove that  amenability of $G$ implies that $P_G(T)=P_G(\te)$. For this purpose, we construct  a family of self-adjoint operators on $\ell^2(G)$ for symmetric group extensions as follows. By transitivity, there exists $a \in \cW^\infty$ with $\psi(a) = \id$. Furthermore, for $v \in \cW^\infty$, we have that $a v a^\dagger$ is admissible if and only if $(a v a^\dagger)^\dagger = a v^\dagger a^\dagger$ is admissible. Hence, for $n > |a|$,
\[\iota: \cW^{n}_{a,a^\dagger} \to \cW^{n}_{a,a^\dagger},\  \iota(av):=av^\dagger\]
 defines an involution. Since $\psi(a)=\id$, it follows that $\psi(v) = \psi(\iota v)^{-1}$ for all $v \in \cW^n_{a,a^\dagger}$.   
We now assume that $P_G(\te)$ is finite and fix $\xi \in [a^\dagger]$. For $n \in \N$ and $v \in \cW^n_{a,a^\dagger}$, set
 \[\pi{(\xi,v)} := \frac{1}{2}\left( \Phi_n(\tau_v(\xi)) +  \Phi_n(\tau_{\iota v}(\xi)\right).\] 
This then gives rise to an operator $P_n : \ell^2(G) \to \ell^2(G)$ on the complex Hilbert space $\ell^2(G)$ by
\[P_n(f)(\gamma) := \sum_{v \in \cW^n_{a, a^\dagger}}  \pi{(\xi,v)} f(\gamma \psi(v)^{-1}),\]
where, for ease of notation, the fact that the operator $P_n$ depends on $\xi$ is omitted.
Note that  $P_n(1) =  L^n_\varphi(\1_{[a]})(\xi) < \infty$ and, with $\langle f,g\rangle = \sum \overline{f(\gamma)}g(\gamma)$ referring to the standard inner product, $\langle \1_\gamma,P_n(\1_{\gamma^\ast})\rangle  =  \langle P_n(\1_{\gamma}),\1_{\gamma^\ast}\rangle$, for  $\gamma, \gamma^\ast \in G$. In particular, this implies that $P_n$ is self-adjoint.  

Combining $\langle \1_\gamma,P_n(\1_{\gamma})\rangle = \langle \1_{\id},P_n(\1_{\id})\rangle$ for all $\gamma \in G$ with $P_n$ being self-adjoint then gives that the spectral radius $\rho_n$ of $P_n$ satisfies (see, e.g., \cite{KaimanovichVershik:1983})  
\[ \rho_n = \limsup_{k \to \infty} \sqrt[k]{\langle \1_{\id},P^k_n(\1_{\id})\rangle},\]
and that, if $P_n$ is a positive operator, then the $\limsup$ above is a limit. As an immediate corollary to Kesten's theorem on the spectral radius of the Markov operator associated with a symmetric random walk (see \cite{Kesten:1959a} and \cite[Theorem 5]{KaimanovichVershik:1983}), we obtain the following theorem for group extensions under very mild conditions.

\begin{theorem}\label{theo:Kesten_pt1}
Assume that $T$ is a topologically transitive, symmetric group extension of the topologically mixing topological Markov chain $(\Sigma_A,\te)$, $\varphi$ is  weakly symmetric and $P_G(\te)$ is finite. 
Then $P_G(T)=P_G(\te)$, if $G$ is an amenable group.
\end{theorem}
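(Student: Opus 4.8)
The plan is to prove the two inequalities $P_G(T)\le P_G(\te)$ and $P_G(T)\ge P_G(\te)$ separately, the first being elementary and the second being where amenability enters. For the upper bound, note that a point $(x,g)$ with $T^n(x,g)=(x,g)$ and $x\in[a]$ satisfies $\te^n(x)=x$ together with $\psi_n(x)=\id$, so that $\cZ_{a,\id}^{n}\le Z_a^n$ and hence $P_G(T)\le P_G(\te)$ straight from the definitions. The whole content is therefore the reverse inequality.

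For the lower bound the starting point is the identity $\rho_n=P_n(1)=L^n_\varphi(\1_{[a]})(\xi)$, which is the content of Kesten's theorem combined with amenability: $P_n$ is convolution on $\ell^2(G)$ by the symmetric measure $h\mapsto\sum_{v\in\cW^n_{a,a^\dagger},\,\psi(v)=h}\pi(\xi,v)$ of total mass $P_n(1)$, and since its support generates an (amenable) subgroup of $G$, the normalised operator is the Markov operator of a symmetric walk and has spectral radius one. Because $\frac1n\log L^n_\varphi(\1_{[a]})(\xi)\to P_G(\te)$ for a potential of medium variation on a mixing chain, it then remains only to bound $\frac1n\log\rho_n$ from above by $P_G(T)$ as $n\to\infty$, using $\rho_n=\limsup_k\sqrt[k]{\langle\1_{\id},P^k_n(\1_{\id})\rangle}$.

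Expanding the power gives $\langle\1_{\id},P^k_n(\1_{\id})\rangle=\sum\prod_i\pi(\xi,v_i)$, the sum over tuples $(v_1,\dots,v_k)\in(\cW^n_{a,a^\dagger})^k$ with $\psi(v_1)\cdots\psi(v_k)=\id$, and I want to compare this with $\cZ_{a,\id}^{nk}$, whose exponential rate in $nk$ is $P_G(T)$. The decisive preparatory step is to choose the anchor so that block concatenation is automatic: by topological transitivity of the extension there is a word $a$ with $\psi(a)=\id$ whose last letter equals $a_1^\dagger$, and for such $a$ the exit condition defining $\cW^n_{a,a^\dagger}$ coincides with admissibility of a following copy of $a$. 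Hence every tuple concatenates to an admissible word $W=v_1\cdots v_k\in\cW^{nk}_{a,a^\dagger}$ with $\psi(W)=\id$, and $(v_i)\mapsto W$ is injective. Using weak symmetry to pass from $\pi(\xi,v_i)$ to $\Phi_n(\tau_{v_i}\xi)$ and bounded distortion to pass from $\prod_i\Phi_n(\tau_{v_i}\xi)$ to $\Phi_{nk}(\tau_W\xi)$, one gets $\langle\1_{\id},P^k_n(\1_{\id})\rangle\le (D_nC_n)^{k}C_{nk}\,\cZ_{a,\id}^{nk}$ up to fixed constants. Taking $\sqrt[k]{\cdot}$ removes the $k$-independent factor $C_{nk}$, leaving $\rho_n\le D_nC_n\,e^{nP_G(T)}$; applying $\frac1n\log$ and letting $n\to\infty$ the factors $D_n,C_n$ vanish because $\lim_n D_n^{1/n}=\lim_n C_n^{1/n}=1$, so $P_G(\te)\le P_G(T)$.

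The main obstacle is precisely this last comparison, and within it the control of the symmetrisation. The operator had to be symmetrised — replacing $\Phi_n(\tau_v\xi)$ by the average $\pi(\xi,v)$ with its involuted partner $\tau_{\iota v}\xi$ — in order to apply Kesten's theorem, whereas the genuine periodic orbits counted by $\cZ_{a,\id}^{nk}$ carry only the un-symmetrised weights; reconciling the two amounts to showing that replacing a block $v_i$ by $\iota v_i$ alters $\Phi_n(\tau_{v_i}\xi)$ by at most a sub-exponential factor, which is exactly what weak symmetry (rather than mere medium variation) delivers and which forces the $\dagger$-structure into the argument. The remaining ingredients — independence of the limit defining $P_G(\te)$ from $\xi$ and $a$, and the comparison of $\cL^{nk}$ with $\cZ_{a,\id}^{nk}$ — are routine consequences of bounded distortion and mixing of the base; in particular, no closing-up of orbits by bridges, and hence no big images and preimages hypothesis, is required for this direction.
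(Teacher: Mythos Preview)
Your proof is correct and follows essentially the same strategy as the paper's: both hinge on Kesten's theorem giving $\rho_n=P_n(1)$, relate $P_n(1)$ to $P_G(\te)$ via medium variation, and then bound $\langle\1_{\id},P_n^k(\1_{\id})\rangle$ by the partition function of $T$ through concatenation of blocks and weak symmetry. The one genuine difference is that the paper inserts a bridge word $v\in\cW^l_{a^\dagger a}$ with $\psi(v)=\id$ between consecutive blocks to make the concatenation admissible (yielding a comparison with $\cZ^{k(n+l)}_{a,\id}$), whereas you instead choose the anchor $a$ so that $a_{|a|}=a_1^\dagger$, which forces $(a^\dagger)_1=a_1$ and hence makes the exit condition for $\cW^n_{a,a^\dagger}$ coincide with admissibility of a following copy of $a$; this removes the bridge and the extra length $l$ altogether. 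Both routes are equivalent in spirit, but your choice of anchor is a tidy simplification that slightly streamlines the bookkeeping without altering the underlying mechanism.
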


\begin{proof} Since we obviously have that $P_G(T) \leq P_G(\te)$, it remains to prove the reverse inequality. For $n \geq |a|$ and $g \in G$, set 
\[m_n(g):= \frac{1}{P_n(1)}\sum_{v \in \cW^n_{a,a^\dagger}: \ \psi(v)=g} \pi{(\xi,v)}. 
\]
Note that $m_n(g)$ is well defined since $|P_G(\te)|<\infty$ and that $m_n$ is a symmetric probability measure on $G$, that is $m_n(g)=m_n(g^{-1})$. Moreover, note that the group generated by the support of $m_n$ is a subgroup of $G$ and hence is amenable.  
Since the Markov operator associated with the symmetric random walk given by $m_n$ coincides with $P_n(\cdot)/P_n(1)$ we have by Kesten's theorem that $\rho_n = P_n(1)$. In order to prove the assertion it suffices to show that  $\lim_n \log (\rho_n)/n = P_G(\te)$ and $\limsup_n \log (\rho_n)/n \leq P_G(T)$.

\noindent{\textsc{Step 1.}} For $n \geq |a|$, we have $\iota w \in \cW^n_{a,a^\dagger}$ if and only if $w \in \cW^n_{a,a^\dagger}$. Hence, 
\begin{align*}
 \rho_n = P_n(1) =  \sum_{w \in \cW^n_{a,a^\dagger}} \frac{1}{2} \left(\Phi_n(\tau_w(\xi)) + \Phi_n(\tau_{\iota w}(\xi))\right) =  
 \sum_{w \in \cW^n_{a,a^\dagger}}  \Phi_n(\tau_w(\xi)).
 \end{align*}
 Choose $k \geq |a|$ and $v \in \cW^k_{a,a^\dagger}$. For $n \geq k$, the medium variation property of $\varphi$ gives  
\begin{align*}
 \rho_n =  \sum_{w \in \cW^n_{a,a^\dagger}}  \Phi_n(\tau_w(\xi))
\geq   C_k^{-1} C^{-1}_{n} \Phi_k(\tau_v(\xi))\, Z^{n-k}_b,
\end{align*}
where $b := (a_{|a|})^\dagger$ for $a =(a_1\ldots a_{|a|})$. Hence, $\limsup_n (\log \rho_n)/n = P_G(\te)$.
 
\noindent{\textsc{Step 2.}} For ease of notation, we will write $x=y^{\pm k }z$ for $y^{- k }z\leq x \leq y^{ k }z $. Then, for $w \in \cW^{n}_{a,a^\dagger}$, weak symmetry  implies
\[ \Phi_n(\tau_w(\xi))  = \Phi_{n+|a|}(\tau_w(\xi))/\Phi_{|a|}(\xi) 
=  (C_nD_{n+|a|})^{\pm 1} \Phi_n(\tau_{\iota w}(\xi)).\]
Hence, there exists $(\widehat{D}_n)$ with $\pi{(\xi,w)} = \Phi_n(\tau_w(\xi)) \widehat{D}_n^{\pm 1}$ and  $\lim_{n} {\widehat{D}_n}^{1/n} =1$.  
By transitivity, there exist $l \in \N$, and $v \in \cW^{l}_{a^\dagger a}$ with $\psi(v)=\id$. Hence, for $w_1, w_2, \ldots, w_k \in \cW_{a a^\dagger}^n$, we have 
$w^\ast := (w_1  v w_2 v \ldots w_k v) \in  \cW^{k(n+l)}_{aa}$ and, for $x \in [w^\ast]$, 
\begin{align*} \Phi_{k(n+l)}(x) & \geq \left( \widehat{D}_{n}^{-1} \inf\{\Phi_l(y)\with y \in [va] \}\right)^{k} \prod_{i=1}^k\pi{(\xi,w_i)}. 
 \end{align*}
This gives rise to the following estimate for $\langle 1_{\id}, P_n^k(1_{\id}) \rangle$ in terms of the partition function $\cZ_{a,\id}^{k(n+l)}$ with respect to $T$ as defined above. 
\begin{align*}
 \frac{1}{nk} \log \langle 1_{\id}, P_n^k(1_{\id}) \rangle  & \leq   \frac{1}{n}  \log \left(\widehat{D}_n^{-1}  \inf\{\Phi_l(y)\with y \in [va] \} \right) +    \frac{1}{nk} \log  \cZ_{a,\id}^{k(n+l)}.
\end{align*}
Since $P^2_n$ is a positive operator with $\|P^2_n\| = \rho_n^2$, we obtain, 
by taking the limit for $k \to \infty$, $k \in 2\N$ and then for $n \to \infty$, that $\lim_n \log (\rho_n)/n \leq P_G(T)$.
~ \hfill $\square$ \end{proof}

\section{Kesten's theorem for group extensions}

The essential ingredient of the proof of Theorem \ref{theo:Kesten_pt1} is the fact that a symmetric probability measure defines a symmetric operator on $\ell^2(G)$. So, in order to prove the analogue of Kesten's result for group extensions of topological Markov chains, it remains to show that $P_G(\te)=P_G(T)$ implies amenability, where one is tempted again to use the spectral radius formula applied to symmetric operators on $\ell^2(G)$ as, e.g., in  \cite[p. 478]{KaimanovichVershik:1983}. 
However, it will turn out that the key step in here is to carefully analise an embedding of $\ell^2(G)$ and use an argument based on uniform rotundity to show that in case of amenable groups, almost eigenfunctions are indicator functions.

The arguments of proof in here rely on stronger topological mixing properties in the base. That is, we will have to assume that the base has the b.i.p.-property. As a first consequence of this property, we obtain the existence of the following finite subset of $\cW^\infty$. 

\begin{lemma} \label{lem:finite_subsets} Assume that $(X,T)$ is a topologically transitive group extension of $(\Sig_A,\te)$ and that $(\Sig_A,\te)$ has the b.i.p.-property. Then there  exists $n \in \N$ and a finite subset $\mathcal{J}$ of $\cW^n$ such that for each pair $(\beta,\beta')$ with $\beta,\beta' \in \mathcal{I}_{\textrm{\tiny bip}}$ there exists $w_{\beta,\beta'}  \in  \mathcal{J}$ such that
 $(w_{\beta,\beta'})\in \cW^{n}$ and $\psi_n(w_{\beta,\beta'}) = \id$.   
\end{lemma}

\begin{proof} 
Let $p \in \N$ refer to the period $p$ of the transitive topological Markov chain $(X,T)$.
Then, for each  $a\in \cW\times G$, there exists $N_{a} \in \N$ such that $T^{ps}([a,g]) \supset [a,g]$ for all $s \geq N_a$ and $g \in G$.
Hence, for each pair $(\beta,\beta') \in  \mathcal{I}_{\textrm{\tiny bip}}^2$ with $(\beta' \beta)$ admissible, there is $N_{\beta,\beta'}$ such that for each $s \geq N_{\beta,\beta'}$ there exists $v_{\beta,\beta'} \in \cW^{ps-2}$ such that $(\beta' \beta v_{\beta,\beta'} \beta')$ is admissible 
 and $\psi_{ps}(\beta' \beta v_{\beta,\beta'}) = \psi_{ps}( \beta v_{\beta,\beta'} \beta') = \id$. Since $ \mathcal{I}_{\textrm{\tiny bip}}$ is finite it follows that there exists $k$ (given by $\max \{p N_{\beta,\beta'}\with (\beta,\beta') \in  \mathcal{I}_{\textrm{\tiny bip}}^2 \}$) such that $v_{\beta,\beta'}$ can be always chosen to be an element of $\cW^{k-2}$.

 By possibly adding finitely many states we may assume without loss of generality that the subsystem of $\Sig_A$ with alphabet $\mathcal{I}_{\textrm{\tiny bip}}$  is topologically mixing. It then follows from this that there exists some $l \in \N$ such that each pair $(\beta_0,\beta_l)$ in $\mathcal{I}_{\textrm{\tiny bip}}$ can be connected by an admissible word of the form 
 \[ w_{\beta_0,\beta_l}:= (\beta_0 v_{\beta_0,\beta_1} \beta_1 \beta_2 v_{\beta_2,\beta_3} \beta_3 \beta_4 \ldots  v_{\beta_{l-1},\beta_l} \beta_l ).\]  
The assertion follows with $ \mathcal{J}:= \{ w_{\beta,\beta'} \with \beta,\beta'\in \mathcal{I}_{\textrm{\tiny bip}} \}$.
~ \hfill $\square$ \end{proof}
A further important consequence of the b.i.p.-property is the existence of an invariant Gibbs measure. That is, if  $(\Sigma_A,\te)$ and $\varphi$ are given such that $(\Sigma_A,\te)$ has the b.i.p.-property, $\log \varphi$ is Hölder continuous and $\|L_\varphi(1)\|_\infty < \infty$, then there exists a $\exp(-P_G(\te,\varphi))\cdot \varphi$-Gibbs measure $\mu$ and a Hölder-continuous eigenfunction $h$ such that $h d\mu$ is an invariant probability measure. It is also worth noting that  $(\Sigma_A,\te)$ has the Gibbs-Markov property as defined below with respect to $\mu$ (see \cite{MauldinUrbanski:2001,Sarig:2003a}). Moreover, since the function $\log h$ is uniformly bounded from above and below, we assume from now on, without loss of generality, that $ P_G(\te,\varphi)=0$ and $L_\varphi(1) = 1$.

The existence of $\mu$ then gives rise to the following definition of $\cH_1$ and  $\cH_\infty$.
Given a measurable function $f:X \to \R$, $g \in G$ and $p=1$ or $p=\infty$, set $\|f\|_p^g := \|f(\cdot\ ,g)\|_p$ and define
\begin{align*} 
 \llbracket f\rrbracket_p:= \sqrt{{\textstyle\sum_{g \in G} (\|f\|_p^g)^2 } } \quad \hbox{ and } \quad   \cH_p:= \{f: X \to \R\with \llbracket f \rrbracket_p < \infty \}.
\end{align*}
Furthermore, set $\cH_c := \{f \in \cH \with f \hbox{ is constant on } {X_g} \forall g \in G\}$ and $\rho:= \exp(P_G(T,\hat\varphi))$.

\begin{proposition}\label{prop:operator_on_cH} The function spaces $(\cH_1, \llbracket \cdot \rrbracket_1)$ and $(\cH_\infty, \llbracket \cdot \rrbracket_\infty)$ are Banach spaces, the operators $\cL_\varphi^k: \cH_\infty \to \cH_\infty$ are bounded and there exists $C\geq1$ such that   $\llbracket\cL_\varphi^k\rrbracket_\infty \leq C$ for all $k\in \N$. Furthermore, 
\[ \Lambda_k := \sup\left(\left\{  \llbracket \cL_\varphi^k(f)  \rrbracket_1 /  \llbracket f \rrbracket_1  \with f \geq 0, f\in \cH_c  \right\}\right) \leq 1 \] 
and $\limsup_{k \to \infty} (\Lambda_k)^{1/k} \geq \rho$.
\end{proposition}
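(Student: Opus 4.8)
The plan is to establish the five assertions in turn, using throughout the normalisations $P_G(\te,\varphi)=0$ and $L_\varphi(1)=1$, the $\varphi$-conformality of $\mu$ in the dual form $\int L_\varphi^k(f)\,d\mu=\int f\,d\mu$, and the bounded distortion $\Phi_k(y)\asymp\Phi_k(y')$ for $y,y'\in[v]$, $v\in\cW^k$, which holds since $\log\varphi$ is Hölder. For completeness I would note that $\cH_p$ is precisely the $\ell^2$-direct sum $\bigl(\bigoplus_{g\in G}L^p(\mu)\bigr)_{\ell^2}$, as $\llbracket f\rrbracket_p$ is the $\ell^2(G)$-norm of $(\|f(\cdot,g)\|_p)_g$; completeness then follows from that of each $L^p(\mu)$ and of $\ell^2(G)$ by the standard Cauchy/diagonal argument, so this step is routine.

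For $\Lambda_k\le 1$, I would write $f\in\cH_c$, $f\ge 0$, as $f(x,g)=\phi(g)$ with $\phi\colon G\to[0,\infty)$, so that $\llbracket f\rrbracket_1=\|\phi\|_{\ell^2(G)}$ since $\mu$ is a probability. Expanding $\cL_\varphi^k$ and integrating, conformality in the form $\int_{\te^k[v]}\Phi_k(\tau_v(x))\,d\mu(x)=\int L_\varphi^k(\1_{[v]})\,d\mu=\mu([v])$ gives the exact identity
\[\|\cL_\varphi^k f\|_1^g=\sum_{v\in\cW^k}\mu([v])\,\phi(g\psi(v)^{-1}).\]
With $\nu_k(h):=\sum_{v\in\cW^k,\,\psi(v)=h}\mu([v])$ this reads $\|\cL_\varphi^k f\|_1^g=\sum_h\nu_k(h)\phi(gh^{-1})$, a right convolution of $\phi$ by the probability measure $\nu_k$ (note $\sum_v\mu([v])=1$). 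Since right translation is an $\ell^2(G)$-isometry, Minkowski's inequality yields $\llbracket\cL_\varphi^k f\rrbracket_1\le\sum_h\nu_k(h)\|\phi\|_{\ell^2}=\llbracket f\rrbracket_1$, whence $\Lambda_k\le 1$.

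For boundedness on $\cH_\infty$ I would work with the dual weight $\phi(h):=\|f\|_\infty^h$. Because $\sum_{v:\,x\in\te^k[v]}\Phi_k(\tau_v(x))=L_\varphi^k(1)(x)=1$, the defining inner sum is a convex combination, so Jensen gives $(\|\cL_\varphi^k f\|_\infty^g)^2\le\sup_x\sum_{v:\,x\in\te^k[v]}\Phi_k(\tau_v(x))\,\phi(g\psi(v)^{-1})^2$. Replacing $\Phi_k(\tau_v(x))$ by an $x$-independent weight $w_v\asymp\Phi_k(\tau_v(x))$ via bounded distortion and dropping the constraint $x\in\te^k[v]$ then yields $\llbracket\cL_\varphi^k f\rrbracket_\infty^2\ll\bigl(\sum_{v\in\cW^k}w_v\bigr)\|\phi\|_{\ell^2}^2$. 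The crux, and the step I expect to be the main obstacle, is the uniform bound $\sum_{v\in\cW^k}w_v\ll 1$; this is exactly where the b.i.p.-property enters. For each $v$, big images provides $\beta\in\mathcal{I}_{\textrm{\tiny bip}}$ with $[\beta]\subset\te^k([v])$; fixing a point $x_\beta\in[\beta]$ for each of the finitely many $\beta$ and using $L_\varphi^k(1)(x_\beta)=1$, one estimates $\sum_{v:\,\beta(v)=\beta}w_v\ll L_\varphi^k(1)(x_\beta)=1$, and summing over the finite set $\mathcal{I}_{\textrm{\tiny bip}}$ gives $\sum_v w_v\ll 1$ independently of $k$. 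This produces the uniform bound $\llbracket\cL_\varphi^k\rrbracket_\infty\le C$.

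Finally, for $\limsup_k\Lambda_k^{1/k}\ge\rho$ I would test $\Lambda_k$ against $f=\1_{X_{\id}}\in\cH_c$, for which $\llbracket f\rrbracket_1=1$ and, by the identity above with $\phi=\1_{\{\id\}}$, $\|\cL_\varphi^k f\|_1^g=\nu_k(g)$, so $\Lambda_k\ge\llbracket\cL_\varphi^k f\rrbracket_1=\|\nu_k\|_{\ell^2}\ge\nu_k(\id)$. It then remains to bound $\nu_k(\id)$ below by the partition function of the extension. Each periodic point $x\in[a]$ with $\te^k(x)=x$ and $\psi_k(x)=\id$ determines a word $v\in\cW^k$ beginning with $a$, with $\psi(v)=\id$ and $\Phi_k(x)\asymp\mu([v])$ by the Gibbs property; as all such $v$ appear in the sum defining $\nu_k(\id)$, one obtains $\nu_k(\id)\gg\cZ_{a,\id}^k$ with a constant independent of $k$. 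Taking $k$-th roots, passing to $\limsup$, and recalling $\log\rho=P_G(T)=\limsup_k\frac1k\log\cZ_{a,\id}^k$ gives $\limsup_k\nu_k(\id)^{1/k}\ge\rho$, hence $\limsup_k\Lambda_k^{1/k}\ge\rho$, which finishes the proof.
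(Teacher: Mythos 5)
Your argument is correct and follows essentially the same route as the paper: Jensen's inequality plus a uniform bound on $\sum_{v\in\cW^k}\sup_x\Phi_k\circ\tau_v(x)$ for the $\llbracket\cdot\rrbracket_\infty$ estimate, conformality of $\mu$ together with the triangle inequality in $\ell^2(G)$ for $\Lambda_k\le 1$, and testing against $\1_{X_{\id}}$ combined with the Gibbs property for the lower bound on $\limsup_k\Lambda_k^{1/k}$. The only cosmetic difference is that the paper obtains the uniform summability of the weights directly from the Gibbs property of $\mu$ (namely $\sup_x\Phi_k\circ\tau_v(x)\le C\mu([v])$ and $\sum_v\mu([v])=1$), whereas you re-derive it from big images and $L_\varphi^k(1)=1$; both are valid.
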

\begin{proof} The proof that $(\cH_p, \llbracket \cdot \rrbracket_p)$ are Banach spaces is standard and therefore omitted.
 In order to prove the uniform bound for $\llbracket\cL_\varphi^k\rrbracket_\infty$, assume that $f \in \cH_\infty$ and $k \in \N$. Using Jensen's inequality, we obtain
\begin{align} 
\nonumber 	\llbracket  \cL^k_{\varphi}(f) \rrbracket_\infty^2  
& \leq \sum_{g\in G} \sup_{x \in \Sig_A} \left(   {\sum_{v \in \cW^k}} \Phi_k \circ \tau_v(x) 
\|f\|^{g\psi_k(v)^{-1}}_\infty  \right)^2 
\leq \sum_{g\in G} \sup_{x \in \Sig_A}   {\sum_{v \in \cW^k}} \Phi_k \circ \tau_v(x) 
\left( \|f\|^{g\psi_k(v)^{-1}}_\infty  \right)^2 \\
\nonumber  & 
\leq  C  {\sum_{v \in \cW^k}} \mu([v]) \llbracket  f \rrbracket_\infty^2 = C \llbracket  f \rrbracket_\infty^2,
\end{align}
where $C$ is given by the Gibbs property of $\mu$ on $\Sig_A$ (see (\ref{eq:Gibbs})). Hence, $C$ is an upper bound for $\llbracket \cL_\varphi^k \rrbracket_\infty$, independent from $k$. 

Now assume that $f \in \cH_c$. It then follows from the conformality of $\mu$ that 
\begin{align} 
\nonumber 	
\llbracket  \cL^k_{\varphi}(f) \rrbracket_1  
& \leq  \sum_{v \in \cW^k}  \llbracket  \Phi_k \circ \tau_v  f\circ \tau_v \rrbracket_1  
=  \sum_{v \in \cW^k}  \left( {\textstyle \sum_{g\in G} (\int \Phi_k \circ \tau_v f\circ \tau_v(\cdot,g) d\mu)^2 }\right)^{1/2} \\
& = \sum_{v \in \cW^k}  \left( {\textstyle \sum_{g\in G} \mu([v])^2 f(x,g\psi_k(v)^{-1})^2 }\right)^{1/2} = \llbracket f \rrbracket_1 ,  
\end{align}
where $x \in \Sig_A$ is arbitrary. Hence, $\Lambda_k \leq 1$ for all $k\in \N$. Finally, observe that the Gibbs property of $\mu$ implies that
\[ \llbracket \cL^n_\varphi(\1_{X_{\id}}) \rrbracket_1 \geq  \|\cL^n_\varphi(\1_{X_{\id}})\|_1^{\id} = \sum_{v:\psi_n(v)=\id} \mu([v]) \gg  \sum_{x:T^n(x,\id)=(x,\id)} \Phi_n(x)  \geq \cZ_{a,\id}^{n}, \]
for all $a \in \cW^1$. Hence, $\limsup_{n \to \infty} (\Lambda_n)^{1/n} \geq \rho$.
~ \hfill $\square$ \end{proof}

The proof of the following result is inspired by an argument of Day (see \cite[Lemma 4]{Day:1964}) which relies on the rotundity of $\ell^2(G)$.  
Recall that a Banach space $(B,\|\cdot \|)$ is \emph{uniformly rotund} if for all $\delta>0$ there exists $\epsilon>0$ such that, for all $f,g$ with $\|f-g\| \geq \delta$ and $\|f\|=\|g\|=1$, it follows that $\|f+g\| \leq 2- \epsilon$. Note that the space $\cH$ does not has this property but the closed subspace $\cH_c$ which is isomorphic to  $\ell^2(G)$. Since $\ell^2(G)$ is uniformly rotund, it follows that  $\cH_c$ has this property as well.    

\begin{lemma}\label{lem:constant_almost_eigenfunction}
 Assume that $\rho=1$. Then there exists $n \in \N$ such that, for given $\epsilon >0$, there exists $f \in \cH_c$ with $f\geq 0$ and $\llbracket \cL_\varphi^n(f) - f \rrbracket_1  \leq \epsilon \llbracket f \rrbracket_1$.
\end{lemma}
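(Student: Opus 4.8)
The plan is to extract a nonnegative function $f \in \cH_c$ whose image under a suitable power of $\cL_\varphi$ is almost parallel to $f$, directly from the estimate $\limsup_k (\Lambda_k)^{1/k} \geq \rho = 1$ in Proposition \ref{prop:operator_on_cH}. Since $\Lambda_k \leq 1$ for all $k$ and $\limsup_k (\Lambda_k)^{1/k} \geq 1$, the sequence $(\Lambda_k)$ cannot decay exponentially; I expect to argue that for every $\ve>0$ there is an $n$ and a nonnegative $f \in \cH_c$ with $\llbracket \cL_\varphi^n(f)\rrbracket_1 \geq (1-\delta)\llbracket f\rrbracket_1$ for a $\delta$ to be chosen in terms of $\ve$. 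The first step is therefore to fix the right $n$: I would choose $n$ so that $\Lambda_n$ is close enough to $1$ (using that $\Lambda_n^{1/n}$ does not tend below $1$), then pick, by definition of the supremum defining $\Lambda_n$, a nonnegative $f \in \cH_c$ realising a value of $\llbracket \cL_\varphi^n(f)\rrbracket_1/\llbracket f\rrbracket_1$ arbitrarily close to $\Lambda_n$, hence close to $1$.

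The crucial point is to pass from ``the norm of $\cL_\varphi^n(f)$ is almost as large as that of $f$'' to ``$\cL_\varphi^n(f)$ is close to $f$ in $\cH_c$'', which is exactly where uniform rotundity of $\cH_c \cong \ell^2(G)$ enters, as flagged in the remark preceding the lemma. The obstacle is that $\cL_\varphi^n(f)$ need not itself lie in $\cH_c$; the operator $\cL_\varphi$ does not preserve constancy on the fibres $X_g$. To handle this I would introduce the fibrewise averaging (conditional expectation) projection $E: \cH_1 \to \cH_c$ that replaces a function by its $\mu$-average on each $X_g$, and work with $g := E(\cL_\varphi^n(f)) \in \cH_c$. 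Because $E$ is a norm-nonincreasing projection in the relevant $\llbracket \cdot \rrbracket_1$-sense (averaging can only decrease the fibrewise $L^1$-norms, and the $\ell^2$-aggregation is monotone), one still has $\llbracket g \rrbracket_1$ close to $\llbracket f\rrbracket_1$, and $g \geq 0$ since $f \geq 0$ and $\cL_\varphi$ preserves nonnegativity.

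With $f, g \in \cH_c$ both nonnegative and $\llbracket f\rrbracket_1, \llbracket g\rrbracket_1$ both near a common value, I would normalise and apply uniform rotundity to $f/\llbracket f\rrbracket_1$ and $g/\llbracket g\rrbracket_1$: if these two unit vectors were far apart, their average would have norm bounded strictly below $1$, contradicting that the norm of their (roughly additive) combination is forced to stay near $1$ by the near-equality of the norms. This yields $\llbracket g - f\rrbracket_1 \leq \ve \llbracket f\rrbracket_1$ after adjusting constants. The last step is to replace $g = E(\cL_\varphi^n(f))$ by $\cL_\varphi^n(f)$ itself inside the estimate $\llbracket \cL_\varphi^n(f) - f\rrbracket_1 \leq \ve\llbracket f\rrbracket_1$; this is legitimate because $f \in \cH_c$ implies $E(f) = f$, so $g - f = E(\cL_\varphi^n(f) - f)$, and since $E$ is a contraction for $\llbracket\cdot\rrbracket_1$ the bound on $\llbracket g - f\rrbracket_1$ transfers.

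\textbf{Main obstacle.} I expect the delicate part to be the bookkeeping in the rotundity argument: uniform rotundity gives, for each $\delta$, an $\ve$, whereas here I need the converse implication (small defect in norm forces small distance), so I must invert the quantifiers and verify that the near-equality $\llbracket \cL_\varphi^n(f)\rrbracket_1 \approx \llbracket f\rrbracket_1$ genuinely controls $\llbracket f + g\rrbracket_1$ from below. Making this quantitative, and simultaneously controlling the loss incurred by the projection $E$, is where the care is needed; the choice of $n$ and the nonnegativity-preservation of $\cL_\varphi$ and $E$ are comparatively routine.
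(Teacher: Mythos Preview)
Your plan has a fatal gap in the very last step and a serious one in the rotundity step; neither is merely bookkeeping.

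\textbf{The contraction goes the wrong way.} You argue that once $\llbracket g-f\rrbracket_1\leq\ve\llbracket f\rrbracket_1$ with $g=E(\cL_\varphi^n f)$, the bound ``transfers'' to $\llbracket \cL_\varphi^n f - f\rrbracket_1$ because $g-f=E(\cL_\varphi^n f - f)$ and $E$ is a contraction. But the contraction gives $\llbracket E(h)\rrbracket_1\leq\llbracket h\rrbracket_1$, so the only inequality you obtain is $\llbracket g-f\rrbracket_1\leq\llbracket \cL_\varphi^n f - f\rrbracket_1$, which is useless for the stated conclusion. Since $\cL_\varphi^n f$ genuinely leaves $\cH_c$, there is no way to recover $\llbracket \cL_\varphi^n f - f\rrbracket_1$ from its projection without an additional argument, and none is on offer here.

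\textbf{Near-equality of norms does not force closeness.} Even at the level of $\hat f,\hat g\in\ell^2(G)$, having $\|\hat g\|_2\approx\|\hat f\|_2$ says nothing about $\|\hat g-\hat f\|_2$: two orthogonal unit vectors already violate what you want. Rotundity needs a lower bound on $\|\hat f+\hat g\|_2$, and for $\hat g=Q_n\hat f=\sum_\gamma m_n(\gamma)R_\gamma\hat f$ (a convex combination of unitary images of $\hat f$) the identity $\|Q_n\hat f\|_2^2=1-\tfrac12\sum_{\gamma,\gamma'}m_n(\gamma)m_n(\gamma')\|R_\gamma\hat f-R_{\gamma'}\hat f\|_2^2$ only tells you the translates are pairwise close. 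To conclude they are close to $\hat f$ itself you need $m_n(\id)>0$, i.e.\ a word of length $n$ with $\psi=\id$; this is precisely the content of Lemma~\ref{lem:finite_subsets}, which your plan never invokes. A related gap: $\limsup_k\Lambda_k^{1/k}=1$ with $\Lambda_k\leq1$ does \emph{not} yield $\Lambda_n$ close to $1$ for any $n$ (consider $\Lambda_k=1/k$), so even your starting point is unjustified.

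The paper's argument is structurally different: it fixes $n=k$ from Lemma~\ref{lem:finite_subsets}, assumes by contradiction that $\delta_1:=\inf\{\llbracket\cL_\varphi^k f-f\rrbracket_1/\llbracket f\rrbracket_1: f\in\cH_c, f\geq0\}>0$, and uses this to find, for every $f$, a word $w_f$ with $\|\hat f-\hat f(\cdot\,\psi(w_f)^{-1})\|_{\ell^2}$ bounded below. Rotundity is then applied in the \emph{contractive} direction: the average of $f\circ\tau_u$ and $f\circ\tau_v$ (with $\psi(u)=\id$, $\psi(v)=\psi(w_f)$) has $\ell^2$-norm at most $(1-\delta_2)\llbracket f\rrbracket_1$. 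Iterating this across blocks, with the H\"older continuity of $\varphi$ absorbing the potential fluctuations, yields $\llbracket\cL_\varphi^{nmk}f\rrbracket_1\leq(1-2\alpha\delta_3)^n\llbracket f\rrbracket_1$, contradicting $\rho=1$ via Proposition~\ref{prop:operator_on_cH}. The role of rotundity is thus to produce exponential \emph{decay} under the negated hypothesis, not to upgrade near-equality of norms to closeness.
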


\begin{proof} Let $k$ be given by $\mathcal{J}\subset \cW^{k}$ where $\mathcal{J}$ refers to the finite set given by Lemma \ref{lem:finite_subsets} and assume that  
\begin{equation} \nonumber \label{eq:no_almost_eigenfunction} \delta_1 := \inf\left\{\llbracket \cL_\varphi^k(f) - f \rrbracket_1 /\llbracket f \rrbracket_1 \with f \in \cH_c, f\geq 0, f \neq 0 \right\} >0. \end{equation}
\noindent \textsc{Step 1}. We begin showing that $\delta_1>0$ implies that there exists a finite set of words such that, for all $x \in \Sig_A$ and $f \in \cH_c$, the estimate (\ref{eq:decay_of_translations}) below holds and that two arbitrary words can be joined by elements of this finite set. In order to do so, note that it is possible to choose a finite set $\cW^\ast \subset \cW^k$ with 
$ \sum_{v \in \cW^k \setminus \cW^\ast} \mu([v]) \leq {\delta_1}/({4})$. 
For $f \in \mathcal{H}_c$, we hence have that 
\begin{align*}
\llbracket \sum_{v \in \cW^k \setminus \cW^\ast} \Phi_k\circ\tau_v f \circ\tau_v \rrbracket_1 
\leq \sum_{v \in \cW^k \setminus \cW^\ast} \mu([w]) \llbracket f \rrbracket_1 \leq {\delta_1 \llbracket f \rrbracket_1}/({4}).
\end{align*}
Using $L_\varphi(1)=1$ and the $\triangle$-inequality then gives 
\begin{align*}
\delta_1 \, \llbracket f \rrbracket_1& \leq \llbracket \cL_\varphi^k(f) - f \rrbracket_1 = 
\llbracket \sum_{v \in \cW^k} \Phi_k\circ\tau_v(f \circ\tau_v - f) \rrbracket_1 \\
& \leq  \sum_{v \in \cW^\ast} \llbracket \Phi_k\circ\tau_v(f \circ\tau_v - f) \rrbracket_1 + 
\llbracket \sum_{v \notin \cW^\ast}  \Phi_k\circ\tau_v f \circ\tau_v \rrbracket_1 +  \llbracket \sum_{v \notin \cW^\ast} \Phi_k\circ\tau_v f \rrbracket_1  \\
& \leq   \sum_{v \in \cW^\ast} \mu([v]) \llbracket (f\circ \tau_{v} - f)\1_{\te^k([v])\times G} \rrbracket_1 + \delta_1/2.
\end{align*}
It follows from a convex sum argument that there exists $w_f \in  \cW^\ast$ with 
$\llbracket (f\circ \tau_{w_f} - f)\1_{\te^k([w_f])\times G} \rrbracket_1 \geq \delta_1\llbracket f \rrbracket_1/2$. Furthermore, note that the  b.i.p.-property implies that $\mu(\te^k([v]))$ is uniformly bounded from below for all $v \in \cW^k$. 
With $\hat{f}(g)$ referring to $f(x,g)$, we hence obtain that    
\[ \llbracket (f\circ \tau_{w_f} - f)\1_{\te^k([w_f])\times G} \rrbracket_1 = \mu(\te^k([w_f])) \|\hat{f} - \hat{f}(\cdot \ \psi(w_f)^{-1}) \|_{\ell^2(G)} \gg \|\hat{f} - \hat{f}(\cdot \ \psi(w_f)^{-1}) \|_{\ell^2(G)}.\] 
For $x \in \Sig_A$ and $a \in \mathcal{I}_{\textrm{\tiny bip}}$ with $[a] \subset \te^k([w_f])$, we now choose $w_a, w_x \in \mathcal{J}$ such that $x \in \te^k([w_x])$, $(w_aa)$ is admissible, and $u:=(w_aw_x)$, $v:=(w_fw_x)$ are in $\cW^{2k}$. 
Since $\psi_k(w_a)=\psi_k(w_x)=\id$ and $f \in \mathcal{H}_c$, we have $\psi_{2k}(u) = \id$ and $\psi_{2k}(v) = \psi_{k}(w_f)$. Hence, the  rotundity of $\cH_c$ implies that there exists a uniform constant ${\delta_2}>0$ with    
\begin{equation} \label{eq:decay_of_translations}
\frac{1}{2}\left\|  f \circ \tau_{u}(x,\cdot\ ) +   f\circ \tau_{v}(x,\cdot \ ) \right\|_{\ell^2(G)} 
\leq  (1-{\delta_2})\llbracket f \rrbracket_1, \ \forall x \in [a].
 \end{equation}
Furthermore, we may choose $w_a \in \mathcal{J}$ above such that there exists $\beta \in \mathcal{I}_{\textrm{\tiny bip}}$ with $ [w_a] \cup [w_f] \subset \te([\beta])$. By substituting $u$ and $v$ with $(w u)$ and  $(wv)$, respectively, for some admissible word $w$ in $\mathcal{J}^{m-2}$ and $m \in \N$ to be specified later, we hence may assume that there exists a finite subset $\cW^\dagger$ of $\cW^{mk}$ with the following properties. For all $w_i \in \cW^{n_i}$, $i=1,2$ and $f \in \cH_c$, there exist $u({w_1,f,w_2})$ and $v({w_1,f,w_2})$ in $\cW^\dagger$ such that
\begin{enumerate}
 \item  the estimate (\ref{eq:decay_of_translations}) holds for $u := u({w_1,f,w_2})$ and $v := v({w_1,f,w_2})$, with $x \in [w_2]$,
 \item  the first $k(m-2)$ letters of $ u({w_1,f,w_2})$ and $ v({w_1,f,w_2})$ coincide,
 \item  $w_1 u({w_1,f,w_2}) w_2$ and $w_1 v({w_1,f,w_2}) w_2$ are admissible. 
\end{enumerate}
\noindent \textsc{Step 2}. In order to obtain the estimate in (\ref{eq:decay_of_translations_v2}), we prove that the fluctuations of the potential can be absorbed into the uniform factor in (\ref{eq:decay_of_translations}). Observe that it follows from the H\"older continuity of $\log \varphi$ that we may choose $m$ such that 
\begin{equation}\label{eq:absorbing_hoelder_coeficients}
  \sqrt{1-\delta_2}  \leq \frac{\Phi_n \circ \tau_{w_1}(\tau_{u({w_1,f,w_2})}(x))}{\Phi_n \circ \tau_{w_1}(\tau_{v({w_1,f,w_2})}(x))} \leq \left(\sqrt{1-\delta_2} \right)^{-1},\end{equation}
 for all $n \in \N$ and $w \in \cW^n$. 
 Moreover, since $|\cW^\dagger| < \infty$, we have 
 \[2 \alpha := \inf\left(\left\{ \Phi_{mk}(\tau_{u}(x)) \with x \in \te^{mk}([u]), u \in \cW^\dagger \right\}\right)>0.\]
 By dividing each $u \in \cW^\dagger$ into two words $u_1$ and $u_2$ and setting $\Phi_{mk}(\tau_{u_2}(x)) := \Phi_{mk}(\tau_{u}(x)) - \alpha$  and $\Phi_{mk}(\tau_{u_1}(x)):= \alpha$ for each  $x \in \te^{mk}([u])$, we may assume 
without loss of generality that $\Phi_{mk}(\tau_u(x)) = \alpha$ for all $x \in \te^{mk}([u])$ and $u \in \cW^\dagger$.

By combining the above considerations, we are now in position to prove the main estimate with respect to $\cW^\dagger$, for a given function $f \in \cH_c$ and $(k+1)$ finite words $w_i \in \cW^{n_i}$, for $i=0,1,\ldots,k$. For a finite word $w$, set $f_w(x,g):=f(\tau_w(x), g \psi(w)^{-1})$, and define by induction, for $j=1,2, \ldots,k$,
\begin{align*} f_{j} & := \sum_{(i_1,\ldots,i_{j-1}) \in \{1,2\}^{j-1}} f_{w_0 u_1^{(i_1)} w_1 \ldots u_{j-1}^{(i_{j-1})} w_{j-1}},\\    
u_j^{(1)} & := u(w_{j-1},f_j,w_j), \quad u_j^{(2)} := v(w_{j-1},f_j,w_j).
\end{align*}
We then have, where $N := n_0 + n_1 + \ldots + n_n + nmk$, 
\begin{align*}  
 & \big\| 
 \sum_{(i_1,\ldots,i_n) \in \{1,2\}^n} \Phi_N(\tau_{w_0 u^{(i_1)}_1 w_1 \ldots u^{(i_n)}_n w_n}(x)) f(\tau_{w_0 u^{(i_1)}_1 w_1 \ldots u^{(i_n)}_n w_n})(x,\cdot \ )  \big\|_{\ell^2(G)} \\
 & \leq \ \left(\max_{(i_1,\ldots,i_n) \in \{1,2\}^n} \Phi_{N}(\tau_{w_0 u^{(i_1)}_1\ldots  u^{(i_n)}_n w_n}(x))\right)  \big\| \sum_{(i_1,\ldots,i_n) \in \{1,2\}^n}  \hat{f}_{w_0 u^{(i_1)}_1 \ldots w_{n-1}}(\cdot\   \psi(w_n)^{-1}      \psi({u^{(i_n)}_n})^{-1} )  \big\|_{\ell^2(G)} \\
 & = \ \left(\max_{(i_1,\ldots,i_n) \in \{1,2\}^n} \Phi_{N}(\tau_{w_0 u^{(i_1)}_1 \ldots  u^{(i_n)}_n w_n}(x))\right)  
\big\| \sum_{i_n \in \{1,2\}}  \hat{f}_{n}(\cdot\     \psi(({u^{(i_{n})}_{n}}))^{-1} )  \big\|_{\ell^2(G)} \\
& \leq \ \left(\max_{(i_1,\ldots,i_n) \in \{1,2\}^n} \Phi_{N}(\tau_{w_0 u^{(i_1)}_1 \ldots  u^{(i_n)}_n w_n}(x))\right)  
\left( 2(1-\delta_2)\right) \big\| \sum_{i_{n-1}\in \{1,2\}}  \hat{f}_{n-1}(\cdot\   \psi({u^{(i_{n-1})}_{n-1}})^{-1} )  \big\|_{\ell^2(G)}. 
\end{align*}
Combing (\ref{eq:absorbing_hoelder_coeficients}) with $\Phi_{mk}|_{[u]}=\alpha$ for all $u \in \cW^\dagger$, we hence obtain that, with $\delta_3:= 1- \sqrt{1-\delta_2}$, 
\begin{align}  
\nonumber
 & \big\| 
 \sum_{(i_1,\ldots,i_n) \in \{1,2\}^n} \Phi_N(\tau_{w_0 u^{(i_1)}_1 w_1 \ldots u^{(i_n)}_n w_n}(x)) f(\tau_{w_0 u^{(i_1)}_1 w_1 \ldots u^{(i_n)}_n w_n})(x,\cdot \ ) ) \big\|_{\ell^2(G)} \\
\nonumber
& \leq \ \left(\max_{(i_1,\ldots,i_n) \in \{1,2\}^n} \Phi_{N}(\tau_{w_0 u^{(i_1)}_1 w_1\ldots  u^{(i_n)}_n w_n}(x))\right)  
\left( 2(1-\delta_2)\right)^n \llbracket f \rrbracket_1 \\
\label{eq:decay_of_translations_v2}
& \leq \  \left( \sum_{(i_1,\ldots,i_n) \in \{1,2\}^n} \Phi_{N}(\tau_{w_0 u^{(i_1)}_1 w_1\ldots  u^{(i_n)}_n w_n}(x))  \right) \left(1-\delta_3\right)^{n} \llbracket f \rrbracket_1.
\end{align} 

\noindent \textsc{Step 3}. We  prove that $\delta_1>0$ implies that $\|\cL_\varphi^{nmk}(f)(x,\cdot )\|_{\ell^2(G)}$ decays exponentially and show that this is a contradiction to the last statement in Proposition \ref{prop:operator_on_cH}.

We now fix $f \in \cH_c$ and $x \in \Sig_A$. For $n \in \N$, we refer to $\cF$ as the set of all subsets of $\{1,2,\ldots, n\}$ and define, for each $\omega:= \{k_1, k_2,\ldots, k_d\} \in \cF$, a subset $\cV_\omega$ of $\cW^{nmk}$  as follows. A word $w=(w_1\ldots w_n) \in \cW^{nmk}$ is an element of $\cV_\omega$ if and only if there exist $w^{(i_j)}_{k_j} \in \cW^\dagger$, for $i_j=1,2$ and $j=1, \ldots,d$, such that $w_{k_j}= w^{(1)}_{k_j}$ and
\[ \big\|  \sum_{\star} \Phi_{nmk}(\tau_{v}(x)) f(\tau_{v}(x,\cdot)) \big\|_{\ell^2(G)} \leq \left( \sum_{\star} \Phi_{nmk}(\tau_{v}(x)) \right) (1-\delta_3)^{|\omega|} \llbracket f \rrbracket_1,
\]
where $\star$ stands for the summation over all $v = (v_1\ldots v_n)$ with $v_i = w_i$ for $i \notin \omega$ and 
$v_i \in \{w_i^{(1)},w_i^{(2)}\}$ or $i \in \omega$. Observe that the construction of $u_{i}^{(1)}$ and $u_{i}^{(2)}$ above and the estimate (\ref{eq:decay_of_translations_v2}) imply that $\{\cV_\omega \with \omega \in \cF\}$ is a covering of $\cW^{nmk}$. Hence,
\[ \cV^\ast_\omega := \cV_\omega\setminus\left({\textstyle \bigcup_{\omega \subset \omega', \omega \neq \omega'} \cV_{\omega'}}  \right) \]
defines a partition of $\cW^{nmk}$. Furthermore, for $\omega:= \{k_1, k_2,\ldots, k_d\}$, $j \in \{1,\ldots,d\}$ such that $k_j -1 \notin \omega$ and $w_1 \ldots w_n \in \cV^\ast_\omega$, we have
\[\sum_{v_1 \ldots v_{k_j -1} w_{k_j} \ldots w_n \in \cV^\ast_\omega} \Phi_{mk}(\tau_{v_{k_j -1}w_{k_j} \ldots w_n}(x)) \leq 1 - 2\alpha,\]
since there exists at least one pair of elements $w^{(i)}$ ($i=1,2$) in $\cW^\dagger$ such that $w_1\ldots w^{(i)}w_{k_j} \ldots w_n \in \cV^\ast_{\omega'}$ with $\omega' = \omega \cup \{k_j -1\}$. Hence, 
\begin{align*}
\| \cL_\varphi^{nmk}(f)(x,\cdot\ )\|_{\ell^2(G)} & = \big\| \sum_{\omega \in \cF} \sum_{w \in \cV^\ast_\omega} \Phi_{{nmk}}(\tau_w(x)) \hat{f}(\cdot \ \psi_w^{-1}) \big\|_{\ell^2(G)} \\
& \leq  \sum_{\omega \in \cF} (1-2\alpha)^{n-|\omega|} (2\alpha)^{|\omega|} (1-\delta_3)^{|\omega|}  \llbracket f \rrbracket_1 \\
& = \sum_{j=0}^n \genfrac{(}{)}{0pt}{}{n}{k} (1-2\alpha)^{n-k}  (2\alpha(1-\delta_3))^{k} \llbracket f \rrbracket_1= (1 - 2\alpha \delta)^n \llbracket f \rrbracket_1.
 \end{align*}
By applying Jensen's inequality, we obtain $\llbracket \cL_\varphi^{nmk}(f)\rrbracket_1 \leq (1 - 2\alpha \delta)^n \llbracket f \rrbracket_1$ which is a contradiction to $\rho=1$ by Proposition \ref{prop:operator_on_cH}. Hence, $\delta_1=0$. 
~ \hfill $\square$ \end{proof}

We are now in position to prove the converse to Theorem \ref{theo:Kesten_pt1}. 

\begin{theorem}\label{theo:Kesten_reverse}  Assume that $(\Sig_A,\te)$ is a topological Markov chain with the b.i.p.-property, that $(X,T)$ is a topologically transitive $G$-extension and that $\varphi$ is a Hölder continuous potential with $\|L_\varphi(1)\|_\infty < \infty$. Then $P_G(T,\varphi)=P_G(\te,\varphi)$ implies that the group $G$ is amenable.
\end{theorem}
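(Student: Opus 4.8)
The plan is to feed the almost-eigenfunctions produced by Lemma~\ref{lem:constant_almost_eigenfunction} into the standard F\o lner/Reiter characterisation of amenability. Since we have normalised so that $P_G(\te,\varphi)=0$ and $L_\varphi(1)=1$, the hypothesis $P_G(T,\varphi)=P_G(\te,\varphi)$ is precisely the statement $\rho=\exp(P_G(T,\hat\varphi))=1$. Lemma~\ref{lem:constant_almost_eigenfunction} then supplies a fixed $n\in\N$ with the property that for every $\epsilon>0$ there is $f\in\cH_c$, $f\geq 0$, with $\llbracket\cL_\varphi^n(f)-f\rrbracket_1\leq\epsilon\llbracket f\rrbracket_1$. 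The goal is to convert this into a net of almost-invariant unit vectors for the regular representation of $G$.

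First I would make the identification $\cH_c\cong\ell^2(G)$ explicit. Writing $\hat f(g)$ for the constant value of $f$ on $X_g$ and using that $\mu$ is a probability measure, one has $\|f\|_1^g=|\hat f(g)|$ and hence $\llbracket f\rrbracket_1=\|\hat f\|_{\ell^2(G)}$; thus on $\cH_c$ the norm $\llbracket\cdot\rrbracket_1$ is exactly the $\ell^2(G)$-norm (which is why $\cH_c$ inherited uniform rotundity). Next I would compute the action of $\cL_\varphi^n$ on a positive $f\in\cH_c$. Because $f$ is constant on fibres and $\cL_\varphi^n(f)\geq 0$, conformality of $\mu$ (in the form $\int_{\te^n([v])}\Phi_n\circ\tau_v\,d\mu=\mu([v])$) gives $\|\cL_\varphi^n(f)\|_1^g=\sum_{h\in G}m_n(h)\hat f(gh^{-1})=:P\hat f(g)$, where $m_n(h):=\sum_{v\in\cW^n,\ \psi(v)=h}\mu([v])$ is a probability measure on $G$ and $P$ is the associated Markov (right-convolution) operator. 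By Jensen's inequality $\|\cL_\varphi^n(f)-f\|_1^g\geq|P\hat f(g)-\hat f(g)|$, so summing the squares over $g$ yields $\|P\hat f-\hat f\|_{\ell^2(G)}\leq\llbracket\cL_\varphi^n(f)-f\rrbracket_1\leq\epsilon\|\hat f\|_{\ell^2(G)}$. Thus $P$ has positive almost-invariant unit vectors.

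To extract amenability I would exploit positivity. Writing $R_h$ for the unitary right translation $R_h\hat f(g)=\hat f(gh^{-1})$, one has $P=\sum_h m_n(h)R_h$ and $\|R_h\hat f-\hat f\|^2=2(\|\hat f\|^2-\langle R_h\hat f,\hat f\rangle)$. From $\langle P\hat f,\hat f\rangle\geq(1-\epsilon)\|\hat f\|^2$ and $\sum_h m_n(h)=1$ it follows that $\sum_h m_n(h)\,\tfrac12\|R_h\hat f-\hat f\|^2\leq\epsilon\|\hat f\|^2$, so for each individual $h_0\in\supp m_n$ one gets $\|R_{h_0}\hat f-\hat f\|\leq\sqrt{2\epsilon/m_n(h_0)}\,\|\hat f\|$. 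Letting $\epsilon\to 0$ therefore produces, for each $h_0\in\supp m_n$, a net of positive unit vectors almost-invariant under $R_{h_0}$ (and hence, by unitarity, under $R_{h_0^{-1}}$). Finally I would check that $\supp m_n=\{\psi(v):v\in\cW^n\}$ generates $G$: any $g=\psi(v)$ may be padded by an identity word coming from Lemma~\ref{lem:finite_subsets} to a word of length a multiple $jn$ of $n$, and splitting such a word into $j$ blocks of length $n$ exhibits $g$ as a product of elements of $\supp m_n$. Since almost-invariance under a generating set propagates to all of $G$ by the triangle inequality, the net satisfies Reiter's condition and $G$ is amenable.

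The main obstacle I anticipate is not the amenability deduction itself but the two bookkeeping points on which it rests: first, the clean identification of $\|\cL_\varphi^n(f)\|_1^g$ with the convolution operator $P$, which is what forces the appeal to conformality and to the positivity of $f$ (without $f\geq 0$ the absolute value inside the fibrewise $L^1$-norm would obstruct the computation); and second, verifying that the fixed length $n$ delivered by Lemma~\ref{lem:constant_almost_eigenfunction} is large enough that $\{\psi(v):v\in\cW^n\}$ generates $G$, for which the b.i.p.\ property and the identity words of Lemma~\ref{lem:finite_subsets} are exactly what is needed. Everything else is the standard equivalence between the existence of positive almost-invariant $\ell^2$-vectors and amenability.
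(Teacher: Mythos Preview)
Your route is correct and coincides with the alternative argument the paper itself sketches in the paragraph immediately following the proof: identify $\cH_c$ with $\ell^2(G)$, recognise that on positive $f\in\cH_c$ the fibrewise $L^1$-norms of $\cL_\varphi^n f$ are given by the convolution operator $Q_n\hat f(g):=\sum_{v\in\cW^n}\mu([v])\hat f(g\psi(v)^{-1})$, use Lemma~\ref{lem:constant_almost_eigenfunction} to force $\|Q_n\|=1$, and invoke Day's criterion. The paper's \emph{primary} proof takes a different, more self-contained endgame. After obtaining $\|\hat f_k(\cdot)-\hat f_k(\cdot h)\|_2\to 0$ for each $h$ in a prescribed finite set $K$ (your step~6, which the paper reaches instead by re-running the rotundity argument from Step~1 of the proof of Lemma~\ref{lem:constant_almost_eigenfunction}), it passes to $\hat f_k^2\in\ell^1(G)$ via Cauchy--Schwarz, writes $\hat f_k^2=\sum_i\lambda_i\1_{A_i}$ with $A_1\subset A_2\subset\cdots$, observes that monotonicity gives the exact identity $\|\hat f_k^2(\cdot)-\hat f_k^2(\cdot h)\|_1=\sum_i\lambda_i|A_i\triangle A_ih^{-1}|$, and extracts a $(K,\epsilon)$-F\o lner set by convexity. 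Your version is shorter but leans on a black-box amenability criterion; the paper's buys independence from Day's theorem by exhibiting the F\o lner sets explicitly.

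One point needs repair. The padding argument for ``$\supp m_n$ generates $G$'' does not work as written: the identity words supplied by Lemma~\ref{lem:finite_subsets} all have length exactly $n$, so appending them changes $|v|$ only within a fixed residue class modulo $n$. For the full two-shift with $G=\Z$, $\psi(a)=1$, $\psi(b)=-1$, every identity word has even length, hence $n$ is even and $\{\psi(v):v\in\cW^n\}\subset 2\Z$ is a proper subgroup. The paper handles this by letting the block length $m$ depend on the finite target set $K$, using the decomposition of $T$ into mixing components to arrange $K\subset\{\psi(v):v\in\cW^m\}$ for some multiple $m$ of $n$, and tacitly using that the conclusion of Lemma~\ref{lem:constant_almost_eigenfunction} persists for $\cL_\varphi^m$. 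You should adopt the same device; alternatively, note that the subgroup generated by $\supp m_n$ has finite index in $G$ (bounded by the period of $T$) and that amenability passes to finite-index over-groups.
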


\begin{proof} We assume without loss of generality that $P_G(\te,\varphi)=0$.  Now choose a finite subset $K$ of $G$. It then follows by decomposition of $T$ into mixing components that there exists $m \in \N$ such that $m$ is a multiple of $n$ in Lemma \ref{lem:constant_almost_eigenfunction} and
$K \subset \{ \psi(v) : v \in \cW^{m}\}$. In particular, there exists a finite subset $\cW_K$ of $\cW^m$ with 
\[ K = \{  \psi(v) : v \in \cW_K \}.\]  
By Lemma \ref{lem:constant_almost_eigenfunction}, there exists a sequence of positive functions $(f_k)$ in $\cH_c$ with  $\lim_{k\to \infty} \llbracket \cL^m(f_k)-f_k \rrbracket_1 =0 $, $f_k \geq 0$  and $\llbracket f_k \rrbracket_1 =1$ for all $k \in \N$. Note that this, in particular, implies that $\lim_{k\to \infty} \llbracket \cL^{m}(f_k) \rrbracket_1 =1$. 
In order to show that $\llbracket (f_k \circ \tau_v - f_k)\cdot \1_{\te^m([v])\times G} \rrbracket_1 \to 0$ for all $v \in \cW_K$, assume that there exists $v \in \cW_K$ with $ \liminf_k \llbracket (f_k \circ \tau_v - f_k)\cdot \1_{\te^m([v])\times G} \rrbracket_1 >0$. By the argument in the first step of proof of Lemma \ref{lem:constant_almost_eigenfunction}, this implies that $ \llbracket \cL^{m}(f_k) \rrbracket_1 $ is bounded away from 1, which is a contradiction. Hence, $ \liminf_k \llbracket (f_k \circ \tau_v - f_k)\cdot \1_{[\te^m([v]),\cdot]} \rrbracket_1 =0$ for all $v \in \cW_K$, and, by taking a subsequence, we may assume without loss of generality that
\[ \lim_{k\to \infty} \llbracket (f_k \circ \tau_v - f_k)\cdot \1_{{\te^m([v])\times G}} \rrbracket_1 =0,  \quad \forall v \in \cW_K.\]
Furthermore, recall that the Hölder inequality implies, with $\hat{f}_k(g):= f_k(x,g)$ and $h\in G$, that
\begin{align}
\nonumber
\|\hat{f_k}^2(\cdot) - \hat{f_k}^2(\cdot h)\|_1 & = \sum_{g \in G}|\hat{f_k}^2(g) - \hat{f_k}^2(gh)| 
= \sum_{g \in G}|\hat{f_k}(g) - \hat{f_k}(gh)| \cdot |\hat{f_k}(g) + \hat{f_k}(gh)|  \\
\label{eq:l_1-estimate}
& \leq \|\hat{f_k}(\cdot) - \hat{f_k}(\cdot h)\|_2 \cdot  \|\hat{f_k}(\cdot) + \hat{f_k}(\cdot h)\|_2 \leq  2\|\hat{f_k}(\cdot) - \hat{f_k}(\cdot h)\|_2.
\end{align}
We now fix $k$ to be specified later and use the following representation of $\hat{f_k}^2$. There exist $p \in \N \cup \{\infty\}$ and  $\lambda_i> 0 $ and $A_i \subset G$ with $A_i \subset A_{i+1}$, for $1\leq i<p$, such that $\hat{f_k}^2 = \sum_{i=1}^p \lambda_i \1_{A_i}$. In particular, observe that $\sum_{i=1}^p \lambda_i |{A_i}|=1$ and that   $(A_i h \setminus A_i) \cap  (A_j    \setminus A_j h) = \emptyset$ by  monotonicity of $(A_i)$. Hence, 
\begin{align}\nonumber
\left\|\hat{f_k}^2(\cdot) - \hat{f_k}^2(\cdot h)\right\|_1 
& =  \sum_{g \in G} \left|{\textstyle \sum_i \lambda_i (\1_{A_i}(g) - \1_{A_ih^{-1}}(g))}\right|\\
\label{eq:no_absolute_value}
& =  \sum_{g \in G} { \sum_{i=1}^p \lambda_i \1_{A_i  \triangle A_i h^{-1}}(g)} = \sum_{i=1}^p \lambda_i \left|A_i  \triangle A_ih^{-1}\right|.
\end{align}
We are now in position to prove the amenability of $G$. For $\epsilon >0$, choose $k$ such that
\[ \llbracket (f_k \circ \tau_v - f_k)\cdot \1_{[\te^m(v),\cdot]} \rrbracket =  \|\hat{f_k}(\cdot) - \hat{f_k}(\cdot \psi_m(v)^{-1})\|_2  \leq \epsilon/|\cW_K|  \]
for all $v \in \cW_K$. Combining estimate (\ref{eq:l_1-estimate}) and the identity (\ref{eq:no_absolute_value}) then implies that
\[ \epsilon \geq \frac{1}{2} \sum_{i=1}^p \lambda_i \sum_{h \in K} \left|A_i h \triangle A_i\right|.\]
Hence, there exists $1\leq i \leq p$, such that $\sum_{h \in K} |A_i h \triangle A_i| \leq 2\epsilon|A_i|$.
Note that the above argument shows that for each finite set $K$ and $\epsilon > 0$, there exists a  $(K,\epsilon)$-F\o lner set $A$, that is $A$ is finite and   
\[\sum_{h \in K} |A h \triangle A| \leq \epsilon|A|.\]
In order to prove amenability of $G$ through the construction of a F\o lner sequence $(B_n)$, choose a sequence of finite sets $(K_n)$ with $K_n \nearrow G$ and assume, by induction, that $B_{n+1}$ is a $(K_n\cup B_n,1/n)$-F\o lner set. It is then easy to see that 
$\lim |g B_n \triangle B_n|/ |B_n|=0$ for all $g \in G$.
~ \hfill $\square$ \end{proof} 

It is worth noting that the proof is inspired by an argument in \cite{Greenleaf:1969} where the identity (\ref{eq:no_absolute_value}) was used to derive a weak F\o lner condition. However, there is an alternative chain of arguments for the proof. For $n\in \N$ and $f \in \ell^2(G)$, set $Q_n(f)(g):=\sum_{v \in \cW}\mu([v])f(g\psi_v^{-1})$. It follows from Lemma \ref{lem:constant_almost_eigenfunction} that $\|Q_n\|=1$. Hence, by a result of Day (\cite[Theorem 1(d)]{Day:1964}), $G$ is amenable.
We now present two immediate applications of our results to the co-growth of groups and group extensions of Gibbs-Markov maps.

\paragraph{Co-growth} As a corollary of Theorems \ref{theo:Kesten_pt1} and \ref{theo:Kesten_reverse}, we obtain the criteria for amenability in terms of the co-growth as introduced in \cite{Grigorchuk:1980}. For a set of generators $\mathcal{G}=\{\gamma_1, \gamma_1^{-1}, \ldots, \gamma_r^{-1}\}$ of $G$, let $\Sig_A$ be the subshift of finite type with $\cW=\mathcal{G}$ and transition matrix $(a_{gh})$ given by $a_{gh}=0$ if and only if $(g,h)$ is equal to $(\gamma_i,\gamma_1^{-1})$ or $(\gamma_i^{-1},\gamma_1)$ for some $i=1,\ldots,r$. With respect to the potential $1$ and the cocycle $\psi|_{[g]}=g$, the Grigorchuk-Cohen criterion follows from $P_G(\te)=\log(|\mathcal{G}|-1)$. That is, $G$ is amenable if and only if   
 \[ \limsup_{n \to \infty} |\{w \in \cW^n \with \psi_n(w)=\id \}|^{1/n} =  |\mathcal{G}|-1=2r-1.\]

\paragraph{Gibbs-Markov maps} The class of Gibbs-Markov maps was implicitly introduced in \cite{AaronsonDenkerUrbanski:1993}, and we recall its  definition now. Let $\mu$ be a Borel probability measure on $\Sig_A$ such that, for all $w \in \cW^1$, $\mu$ and $\mu \circ \tau_w$ are equivalent. Then $(\Sig_A,\te, \mu)$ is called  a \emph{Gibbs-Markov map} if 
$\inf \{\mu(\te([w]))\with w \in \cW^1\} >0$ and 
there exists $0<r<1$ such that, 
for all $m,n \in \N$, $v \in \cW^m$, $w \in \cW^n$ with $(vw)\in \cW^{m+n}$,
\begin{equation} \label{eq:GM} \sup_{x,y \in [w]} \left|  \log \frac{d\mu \circ \tau_v}{d\mu}(x) -  \log\frac{d\mu \circ \tau_v}{d\mu}(y) \right| \ll r^n. \end{equation}
Furthermore, we refer to a group extension of a Gibbs-Markov map as \emph{weakly symmetric} if $(\Sig_A,\te,\psi)$ is symmetric and  
\[ \lim_{n\to \infty} \sup_{w \in \cW^n} \sqrt[n]{{\mu([w])}/{\mu([w^\dagger]}} = 1.\]
\begin{theorem}
Assume that $(\Sig_A,\te, \mu)$ is a topologically mixing Gibbs-Markov map with the b.i.p.-property such that $(X,T)$ is a topologically transitive $G$-extension.
\begin{enumerate}
\item If the group extension is weakly symmetric and $G$ is amenable, then
\begin{equation} \label{cond:measure_decay_t} \limsup_{n \to \infty} (\mu(\{x \in \Sig_A \with \psi_n(x)=\id \})^{1/n} =1. \end{equation}  
\item If (\ref{cond:measure_decay_t}) holds, then $G$ is amenable.
\end{enumerate} 
\end{theorem}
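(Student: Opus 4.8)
The plan is to reduce this final theorem, stated for Gibbs-Markov maps, to Theorems \ref{theo:Kesten_pt1} and \ref{theo:Kesten_reverse}, which are stated for potentials. The bridge is the well-known observation that a Gibbs-Markov map $(\Sig_A,\te,\mu)$ carries a natural potential, namely $\varphi := d(\mu\circ\tau_w)/d\mu$ on each cylinder $[w]$, $w\in\cW^1$, equivalently $\varphi = 1/(\text{local Jacobian of }\te)$. The defining distortion estimate (\ref{eq:GM}) is precisely the statement that $\log\varphi$ is locally H\"older continuous, and the condition $\inf\{\mu(\te([w]))\} > 0$ together with the b.i.p.-property guarantees $\|L_\varphi(1)\|_\infty<\infty$ and that $\mu$ is the conformal (Gibbs) measure with $P_G(\te,\varphi)=0$ and $L_\varphi(1)=1$. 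Thus $\mu([w]) \asymp \Phi_n(x)$ for $x\in[w]$, $w\in\cW^n$, up to the Gibbs constants.

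First I would make this dictionary precise: set $\varphi$ as above, verify via (\ref{eq:GM}) that $\log\varphi$ is H\"older continuous, and record that $P_G(\te,\varphi)=0$ with $\mu$ being the associated $\varphi$-Gibbs measure. The key translation for the conclusion is that the partition function $\cZ_{a,\id}^n$ for the extension is comparable to $\mu(\{x : \psi_n(x)=\id, x\in[a]\})$; summing over the finitely many states $a\in\cW^1$ (using b.i.p.) gives $\cZ^n_{\id} \asymp \mu(\{x\in\Sig_A : \psi_n(x)=\id\})$ up to subexponential factors. Consequently the condition (\ref{cond:measure_decay_t}), which asserts $\limsup_n (\mu(\{\psi_n=\id\}))^{1/n}=1$, is equivalent to $P_G(T,\varphi)=0=P_G(\te,\varphi)$, i.e. to $P_G(T,\varphi)=P_G(\te,\varphi)$.

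With this equivalence in hand both implications are immediate. For (i), I would check that the extra hypothesis that the extension is \emph{weakly symmetric} in the Gibbs-Markov sense, namely $\lim_n \sup_{w\in\cW^n}(\mu([w])/\mu([w^\dagger]))^{1/n}=1$, translates under the dictionary $\mu([w])\asymp\Phi_n(x)$ into the weak symmetry of the potential $\varphi$, so that $D_n$ in the definition of weakly symmetric potential can be taken with $\lim_n D_n^{1/n}=1$. Then amenability of $G$ together with Theorem \ref{theo:Kesten_pt1} yields $P_G(T,\varphi)=P_G(\te,\varphi)=0$, which is exactly (\ref{cond:measure_decay_t}). For (ii), assuming (\ref{cond:measure_decay_t}) gives $P_G(T,\varphi)=P_G(\te,\varphi)$, and since $(\Sig_A,\te)$ has the b.i.p.-property and $\varphi$ is H\"older continuous with $\|L_\varphi(1)\|_\infty<\infty$, Theorem \ref{theo:Kesten_reverse} applies directly and forces $G$ to be amenable.

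The main obstacle is purely bookkeeping rather than conceptual: I must be careful that the various comparability constants relating $\mu([w])$, $\Phi_n$, $\cZ^n$ and $\mu(\{\psi_n=\id\})$ are all subexponential, so that they disappear upon taking $n$-th roots and limits. In particular I would verify that restricting the sum defining $\cZ^n_{a,\id}$ to periodic points and passing to the measure of the (non-periodic) set $\{\psi_n=\id\}$ costs only a subexponential factor --- this uses topological mixing and the b.i.p.-property to close up admissible words into loops at a bounded cost, exactly as in Lemma \ref{lem:finite_subsets} and in Step 1 of the proof of Theorem \ref{theo:Kesten_pt1}. Once these estimates are assembled, the theorem follows with no further work.
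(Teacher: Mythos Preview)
Your proposal is correct and follows essentially the same route as the paper: define the potential $\varphi$ via the Radon--Nikodym derivative $d\mu/d(\mu\circ\theta)$, use the Gibbs--Markov estimate (\ref{eq:GM}) to get local H\"older continuity of $\log\varphi$, compare $\mu(\{x:\psi_n(x)=\id\})$ with the partition function $\cZ^n_{a,\id}$ via the Gibbs property and the finite connecting set of Lemma \ref{lem:finite_subsets}, and then invoke Theorems \ref{theo:Kesten_pt1} and \ref{theo:Kesten_reverse}. The paper's proof is slightly terser but structurally identical, including the explicit two-sided estimate $\cZ^{n+k}_{a,\id}\gg \mu(\{\psi_n=\id\})\gg \cZ^n_{a,\id}$ that you describe as the ``bookkeeping'' step.
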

\begin{proof} Set $\Phi_n := {d\mu} / d\mu\circ\theta$, for $n \in \N$. Since $({d\mu} / d\mu\circ\theta)(x) = ({d\mu\circ \tau_v}/{d\mu})(\theta^n(x)$, for $x \in [v]$ and $v \in \cW^n$, observe that the Gibbs-Markov property in (\ref{eq:GM}) implies that $\Phi_n$ is of bounded variation and $\log \Phi_n$ is locally H\"older continuous. Furthermore, combining the last estimate in the proof of Lemma \ref{prop:operator_on_cH} and the finiteness of  $\mathcal{J}\subset \cW^k$ of Lemma \ref{lem:finite_subsets}, we obtain
\[ \cZ^{n+ k}_{a,\id} \gg \mu(\{x \with \psi_n(x)=\id \})\gg \cZ^n_{a,\id}, \]
for all $a \in \cW^1$ and $n\in \N$. Hence, $P_G(T)= \limsup_{n} (1/n) \log \mu(\{x : \psi_n(x)=\id \})$ and the result follows from Theorems \ref{theo:Kesten_pt1} and \ref{theo:Kesten_reverse}.
~ \hfill $\square$ \end{proof}

Note that the above result is an extension of Kesten's result to a class of random walks on groups with stationary increments.

%

\section{An application to hyperbolic geometry}
In order to apply the above results to normal covers of hyperbolic manifolds, we recall the following definition from \cite{StadlbauerStratmann:2005}. A Kleinian group is called essentially free if there exists a Poincar\'e fundamental polyhedron $F$ with faces $f_1, f_2, \ldots f_{2n}$ and associated generators $g_1,g_2 \ldots g_{2n}$ of $G$ with $g_{i}(f_i) = f_{i+n}$, $g^{-1}_{i}(f_{i+n}) = f_{i}$ and $g_i^{-1}=g_{i+n}$ for $i=1,\ldots n$, such that the following conditions are satisfied. In here, we refer to  $\overline{(\cdot)}_{\overline{\H}}$ as the closure in $\overline{\H}$. 
\begin{enumerate} 
\item If $\overline{(f_i)}_{\overline{\H}} \cap \overline{(\bigcup_{j\neq i}f_j)}_{\overline{\H}} \neq \emptyset $ for some $i =1,2,\ldots n$, then  $g_i,g_{i+n}$ are hyberbolic transformations, and $\overline{(f_{i+n})}_{\overline{\H}} \cap \overline{(\bigcup_{j\neq i+n}f_j)}_{\overline{\H}} \neq \emptyset $,    
\item if $\overline{(f_i)}_{\overline{\H}}\cap \overline{(f_j)}_{\overline{\H}}$ is a single point $p$ for some $j =1,2,\ldots 2n$, then $p$ is a parabolic fixed point,
\item if $f_i \cap f_j \neq \emptyset$ for some $j =1,2,\ldots 2n$, then $g_ig_j = g_jg_i$.
\end{enumerate}
Observe that this class comprises all non-cocompact, geometrically finite Fuchsian groups, the class of Schottky groups, and in general gives rise to geometrically finite hyperbolic manifolds which may have cusps of arbitrary rank.

We now proceed with the construction of the associated coding map. Fix a point  $\mathbf{o}$ in the interior $F$, and denote by $a_i$ the intersection of the shadow of $f_i$ and the radial limit set $L_r(G)$ of $G$. 
Furthermore, denote by $\kappa$ the inversion, acting on $\{f_i:i=1,\ldots 2n\}$, $\{g_i:i=1,\ldots, 2n\}$ and $\{a_i:i=1,\ldots, 2n\}$, which is defined by $\kappa f_i=f_{i+n}$, $\kappa g_i=g_{i+n}$ and $\kappa a_i=a_{i+n}$, respectively. Now let $a$ be an atom of the partition $\alpha$ generated by $\{a_i:i=1,\ldots ,2n\}$. Hence there exist $1 \leq k\leq 2n$ and $i_1, \ldots i_k \in \{1, \ldots, 2n\}$ such that $a = \bigcap_{i=1}^k a_{i_l}$. Choose $g_a \in \{g_{i_l}: k= 1,\ldots k\}$, and for $\kappa a:= \bigcap_{i=1}^k \kappa a_{i_l}$, set $g_{\kappa a} = g_a^{-1}$. This then gives rise to the coding map $\Gamma$ defined by 
\[\Gamma: L_r(G) \to L_r(G), \te(x)=g_a(x) \hbox{ for } x \in a, a \in \alpha\]
and a canonical symmetry given by $a^\dagger:=\kappa a$. 
For further details of this construction, we refer to \cite{StadlbauerStratmann:2005}, where it is shown, that $\te$ is well defined,  $\alpha$ is a Markov partition and  the underlying subshift of finite type is topologically mixing. In particular, $L_r(G)$ can be identified with a shift space $\Sig_A$ and $\Gamma$ with the one-sided shift map.

 In order to specify a potential adapted to the geometry of $\H$, recall that  the  Poisson kernel $\cK$ with respect to the ball model is given by, for $x\in \partial \H$ and $\mathbf{o} \in   \H$,
\[\cK(\mathbf{o},x):= \frac{1-|g(\mathbf{o})|^2}{|g(\mathbf{o})-x|^2}.\]
It is well known that $\log \cK(g(\mathbf{o}),x)$, for $g\in G$, is equal to the orientated hyperbolic distance between $\mathbf{o}$ and the horocycle through $g(\mathbf{o})$ and $x$. Note that this horocyclic distance sometimes also is referred to as the Busemann cocycle. 
Furthermore, we recall the following for the potential given by
 \begin{equation}\label{def:Kernel}\varphi(x) = (\mathcal{K}(g_a(\mathbf{o}),x))^\delta,
 \end{equation} 
for $x \in a, a \in \alpha$ and $\delta>0$  to be specified later. If $G$ is convex cocompact, then  $\log \varphi$ is Hölder continuous with respect to the shift metric induced by $\Sig_A$. In case that $G$ is essentially free and contains parabolic elements, set 
\[P:= \overline{F}\cap \{p \in L(G) \with p \hbox{ is a fixed point of a parabolic element in } G \}.\] 
Then, if $B$ is a subset of $L_r(G)$ which is bounded away from $P$ and measurable with respect to some finite refinement by preimages of $\alpha$, the potential associated with the first return map to $B$ is Hölder continuous (see \cite[Lemmata 3.3 \& 3.4]{KessebohmerStratmann:2004a}). For ease of notation, let $(\Sig,\te)$ refer to the first return map to $B$ if $G$ contains parabolic elements and to $(\Sig_A,\Gamma)$ if $G$ does not contain parabolic elements. Observe that in both cases the potential  is Hölder continuous with respect to the shift metric, $(\Sig,\te)$ has the b.i.p.-property and can be identified with a maximal non-invertible factor of a Poincaré section of the geodesic flow on the unit tangent bundle  $T^1(\mathbb{H}/G)$  of $\mathbb{H}/G$ (see \cite{StadlbauerStratmann:2005}). Furthermore, since by construction $B$ is bounded away from $P$, each return to the associated Poincaré section in  $T^1(\mathbb{H}/G)$ corresponds to a return to a ball of bounded diameter with center $\mathbf{o}$ in $\mathbb{H}/G$.  

Combining these observations, one then obtains the following relation between the finite words  of $\Sig$, the elements of $G$ and the hyperbolic distance $d(\mathbf{o},g(\mathbf{o}))$. Let $\cW^n$ refer to the words of length $n$ of $\Sig$ and $g_w \in G$ to the element in $G$ defined by $\te^n|_{[w]} = g_w$. Note that, by construction of $\Gamma$, $G$ is isomorphic to the finite words with respect to the original partition $\alpha$ (see \cite{StadlbauerStratmann:2005}). Hence, after a possible induction to $B$, the map $\cW^\infty \to G$,  $w \mapsto g_w$ is injective. Using the property of bounded returns, it follows that the map is almost onto in the sense that there exists a finite subset $J$ of $G$ such that
\begin{equation} \label{eq:almost_onto} G = \bigcup_{h \in J,w \in \cW^\infty} hg_w.\end{equation} 
For $w \in \cW^\infty$, set $ \Phi_w:=\sup_{x \in [w]}\Phi_{|w|}(x)$. As a further consequence of bounded returns, it then follows that, for $s>0$, 
\begin{equation}\label{eq:comparable}
\Phi_w^s \asymp e^{-s\delta d(\mathbf{o},g_w(\mathbf{o}))}.
\end{equation}
Now assume that $N$ is a normal subgroup of $G$ and recall that in this situation, the manifold $\H/N$ is called periodic with period $G/N$. Since $\H/N$ is a cover of $\H/G$, it follows that $\H/N$ is  geometrically finite if and only if $G/N$ is finite.  The properties (\ref{eq:comparable}) and (\ref{eq:almost_onto}) above now allow relating the exponents of convergence of $N$ and $G$ in terms of the amenability of $G/N$. Therefore, recall that the Dirichlet series
\[
\cP(H,s) := \sum_{g \in H} e^{-s d(\mathbf{o},g(\mathbf{o}))} 
\]
is referred to as the Poincaré series of the Kleinian group $H$, and that its abzissa of convergence $\delta(H)$ is called the exponent of convergence of $H$. In order to quantify $\cP(N,s)$, we will now employ our results on group extensions. In order to do so, for $w \in \cW$ and $x \in [w]$, set $\psi(x):= [g_w] \in G/N$ and   
\[ T: \Sig\times G/N \to \Sig\times G/N, \quad (x,[g]) \mapsto  (\te(x),[g]\psi(x)).\]
In here, we only will make use of the estimates on $\cL^n(\1_{X_{\id}})$, but it is worth noting that $T$ is related to the geodesic flow on the periodic manifold. That is, $T$ is a non-invertible factor of the base transformation of a Ambrose-Kakutani representation of the flow on the periodic manifold (see also, e.g. \cite{AaronsonDenker:1999,Sarig:2004}), where the associated measure is the Liouville-Patterson measure of $G$. 

As an application of Theorems \ref{theo:Kesten_pt1} and \ref{theo:Kesten_reverse} we now obtain the following partial refinement of a result of Brooks for convex-cocompact Kleinian groups in \cite{Brooks:1985} to the class of essentially free Kleinian groups.

\begin{theorem} \label{theo:brooks}
Let $G$ be an essentially free Kleinian group and $N \lhd G$ a normal subgroup. Then $\delta(G)= \delta(N)$ if and only if $G/N$ is amenable.  
\end{theorem}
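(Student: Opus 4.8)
The plan is to reduce Theorem \ref{theo:brooks} to the general pressure-versus-amenability equivalence established in Theorems \ref{theo:Kesten_pt1} and \ref{theo:Kesten_reverse} by identifying the exponents of convergence $\delta(G)$ and $\delta(N)$ with Gurevi\v{c} pressures of the coding map and its $G/N$-extension, respectively, at the parameter $s=\delta(G)$. The central dictionary is the pair of estimates (\ref{eq:comparable}) and (\ref{eq:almost_onto}): the former says $\Phi_w^s \asymp e^{-s\delta d(\mathbf{o},g_w(\mathbf{o}))}$, so that the Poincar\'e series $\cP(H,s)$ is comparable to a sum of the form $\sum_w \Phi_w^{s/\delta}$ ranging over those words $w$ with $g_w \in H$, while the latter guarantees that this word-sum captures all but a finite-index-type discrepancy of the full group sum. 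I would first fix the potential $\varphi$ as in (\ref{def:Kernel}) with the exponent chosen so that the relevant pressure vanishes, i.e. calibrate $\delta$ so that $s=\delta(G)$ corresponds to $P_G(\te,\varphi)=0$; by the variational/convergence-abscissa characterisation this is exactly the statement $\delta(G)=\delta(\Gamma,\varphi)$ in the thermodynamic sense.

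Next I would make the two comparisons precise. Using (\ref{eq:almost_onto}) and the injectivity of $w \mapsto g_w$, the Poincar\'e series of the base group satisfies $\cP(G,s) \asymp \sum_{w \in \cW^\infty} \Phi_w^{s/\delta}$, and an analogous identity holds for $N$ once one restricts to words $w$ with $g_w \in N$, equivalently $\psi(w)=[g_w]=\id$ in $G/N$. Thus the abscissa of convergence $\delta(N)$ is detected by the exponential growth rate of $\sum_{\psi_n(w)=\id}\Phi_w^{s}$, which is precisely the quantity governing the partition function $\cZ_{a,\id}^n$ of the $G/N$-extension $T$ and hence its Gurevi\v{c} pressure $P_G(T,\varphi)$. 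The final estimate in the proof of Proposition \ref{prop:operator_on_cH} together with the finite connecting set $\mathcal{J}$ from Lemma \ref{lem:finite_subsets} gives the two-sided comparison $\cZ^{n+k}_{a,\id} \gg \mu(\{\psi_n=\id\}) \gg \cZ^n_{a,\id}$, allowing me to translate convergence abscissae into pressures cleanly. In this way $\delta(G)=\delta(N)$ becomes equivalent to $P_G(T,\varphi)=P_G(\te,\varphi)=0$.

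With this translation in hand, the theorem follows from the two main results applied to the extension $T$ of $\Sig$ by $G/N$: symmetry of the extension is supplied by the canonical involution $a^\dagger=\kappa a$ (which satisfies $\psi(a^\dagger)=\psi(a)^{-1}$ since $g_{\kappa a}=g_a^{-1}$), the b.i.p.-property and H\"older continuity of $\log\varphi$ are exactly the standing geometric hypotheses recalled for $(\Sig,\te)$, and $\|L_\varphi(1)\|_\infty<\infty$ holds at the critical exponent. Amenability of $G/N$ then forces $P_G(T)=P_G(\te)$ by Theorem \ref{theo:Kesten_pt1}, whence $\delta(N)=\delta(G)$; conversely $\delta(N)=\delta(G)$, i.e. $P_G(T)=P_G(\te)$, forces $G/N$ amenable by Theorem \ref{theo:Kesten_reverse}. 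The main obstacle I anticipate is the calibration step together with the treatment of parabolic elements: to obtain a H\"older potential one must pass to the first-return map to a set $B$ bounded away from the parabolic fixed points $P$, and I would need to check carefully that inducing to $B$ preserves both the two-sided comparison with the Poincar\'e series and the symmetry of the extension, so that no group elements relevant to $\delta(N)$ versus $\delta(G)$ are lost or double-counted in the induction. Verifying that the abscissa of convergence is genuinely realised as the pressure at $s=\delta(G)$ (rather than merely bounded by it), using that $G$ is of divergence or convergence type as needed, is the delicate analytic point on which the equivalence hinges.
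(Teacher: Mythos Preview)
Your overall strategy matches the paper's: set $\delta=\delta(G)$ in (\ref{def:Kernel}), verify that $(\Sig,\te,\varphi)$ and its $G/N$-extension satisfy the hypotheses of Theorems \ref{theo:Kesten_pt1} and \ref{theo:Kesten_reverse}, and then reduce the statement to the equivalence $P_G(T)=0 \Leftrightarrow \delta(N)=\delta(G)$. Where your sketch goes wrong is in how you propose to establish that equivalence.

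You write that the comparison $\cZ^{n+k}_{a,\id} \gg \mu(\{\psi_n=\id\}) \gg \cZ^n_{a,\id}$ from Proposition \ref{prop:operator_on_cH} and Lemma \ref{lem:finite_subsets} lets you ``translate convergence abscissae into pressures cleanly.'' It does not. That comparison only relates two quantities computed at the \emph{fixed} exponent $\delta$; it says nothing about what happens as $s$ varies, and the equivalence you need is between a statement about an exponential rate in word length $n$ (the pressure $P_G(T)$) and a statement about an abscissa in the real parameter $s$ (namely $\delta(N)$). Concretely, $P_G(T)<0$ gives $\sum_{[g_w]=\id}\Phi_w<\infty$, which via (\ref{eq:comparable}) yields only $\cP(N,\delta)<\infty$, i.e.\ $\delta(N)\leq\delta$ --- a triviality. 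To get $\delta(N)<\delta$ one must produce $\epsilon>0$ with $\sum_{[g_w]=\id}\Phi_w^{1-\epsilon}<\infty$, and this requires trading word length against the exponent. The paper does this in two regimes: in the convex-cocompact case one uses $\|1/\varphi\|_\infty<\infty$ to bound $\Phi_w^{1-\epsilon}\leq \Phi_w\|1/\varphi\|_\infty^{\epsilon|w|}$ and then absorbs the factor $\|1/\varphi\|_\infty^{\epsilon|w|}$ into the radius of convergence $e^{-P_G(T)}>1$; in the presence of parabolics $\|1/\varphi\|_\infty=\infty$ and one must instead control, for each parabolic stabiliser $G_p$, the ratio $\sum_{g\in G_p}e^{-(1-\epsilon)\delta d(\mathbf{o},g(\mathbf{o}))}\big/\sum_{g\in G_p}e^{-\delta d(\mathbf{o},g(\mathbf{o}))}$ uniformly via the cusp estimates (using $\delta>k_p/2$). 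The reverse implication $\delta(N)<\delta\Rightarrow P_G(T)<0$ is easier but still not automatic: it uses $\|\varphi\|_\infty<1$ to bound $\Phi_w\leq \Phi_w^{1-\epsilon}\|\varphi\|_\infty^{\epsilon|w|}$. None of this is captured by the Gibbs-type comparisons you cite; you have correctly flagged the parabolic induction as an obstacle, but the underlying analytic issue --- converting a gap in pressure into a gap in abscissa and vice versa --- is present already in the convex-cocompact case and is the actual content of the proof.
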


\begin{proof} Set $\delta= \delta(G)$ in the definition of $\varphi$ and note that for this choice, the existence of a finite, invariant measure implies that $P_G(\te,\varphi)=0$ (see, e.g., \cite{StadlbauerStratmann:2005}). Furthermore, as a consequence of (\ref{eq:comparable}) we have that $\varphi$ is symmetric with respect to the involution generated by $\kappa$. Finally, it can easily be deduced from the connection between the group extension $T$ and the geodesic flow on $T^1(\H/N)$ that $T$ is topologically transitive. Hence, Theorems \ref{theo:Kesten_pt1} and \ref{theo:Kesten_reverse} are applicable and therefore, it remains to show that $P_G(T)< P_G(\te)$ if and only if $\delta(N)<\delta=\delta(G)$. 

So assume that $\delta(N)<\delta$. Hence, for $\epsilon>0$ with  $\delta(N) < (1-\epsilon)\delta$, $\cP(N,(1-\epsilon)\delta)< \infty$. In particular, applying (\ref{eq:almost_onto}) and (\ref{eq:comparable}) gives that
\begin{align*}
\infty > \sum_{g \in N} e^{- (1-\epsilon)\delta(\mathbf{o},g(\mathbf{o}))} \asymp 
\sum_{\genfrac{}{}{0pt}{}{w \in \cW^\infty,}{ [g_w]=\id}} \Phi_w^{1-\epsilon} 
\geq  \sum_{\genfrac{}{}{0pt}{}{w \in \cW^\infty,}{ [g_w]=\id}} 
\Phi_w \|\varphi\|_\infty^{-\epsilon |w|}.
\end{align*}
Since  $\exp(-P_G(T))$ is the radius of convergence of $\sum_{[g_w]=\id} \Phi_w x^{|w|}$, it follows from $\|\varphi\|_\infty < 1$ that $P_G(T) \leq \epsilon \log \|\varphi\|_\infty < 0 = P_G(\te)$.

Now assume that $P_G(T)<0$ and $G$ does not contain parabolic elements. Then $\cW$ is finite and, in particular, $\|1/\varphi\|_\infty < \infty$. Since $P_G(T)<0$, we have that, for $1<x<\exp(-P_G(T))$,  
\begin{align}  \label{eq:convex_cocompact_P<0}
 \infty  > \sum_{\genfrac{}{}{0pt}{}{w \in \cW^\infty,}{ [g_w]=\id}} \Phi_w x^{|w|} \geq \sum_{\genfrac{}{}{0pt}{}{w \in \cW^\infty,}{ [g_w]=\id}}  \Phi_w^{1-\epsilon} {x^{|w|}}{\|1/\varphi\|_\infty^{-\epsilon|w|} }
  \asymp \sum_{\genfrac{}{}{0pt}{}{w \in \cW^\infty,}{ [g_w]=\id}}
 e^{- (1-\epsilon)\delta(\mathbf{o},g_w(\mathbf{o}))} {x^{|w|}}{\|1/\varphi\|_\infty^{-\epsilon|w|} }  . 
 \end{align} 
Hence, if $\epsilon 
< -P_G(T)/\log(\|1/\varphi\|_\infty)$, then $\cP(N,(1-\epsilon)\delta)< \infty$ and, in particular, $\delta(N)<\delta$.  

It remains to consider the case where $P_G(T)<0$ and $G$ contains parabolic elements. For $p \in P$, let $G_p$ denote the stabiliser of $p$ in $G$. 
 By well known arguments (see, e.g., Lemma 3.2 in \cite{StratmannVelani:1995}), we have, for $s > k_p/2$ and $\ell >0$, with $k_p$ referring to the Abelian rank of $G_p$, that 
\[ 
\sum_{\genfrac{}{}{0pt}{}{g \in G_p, }{ d(\mathbf{o},g(\mathbf{o}))\geq 2 \ell}} e^{-sd(\mathbf{o},g(\mathbf{o}))} \asymp \sum_{n \geq e^\ell} \frac{1}{n^{2s - k_p+1}} \asymp \frac{1}{2s - k_p} e^{\ell (k_p-2s)}. 
\]
Since $\delta(G)$ is always bigger than $k_p/2$, we may choose $0<\epsilon< 1-k_p(2\delta)^{-1}$.  For $s=(1-\epsilon)\delta $, we then have
\[
 \sum_{\genfrac{}{}{0pt}{}{g \in G_p, }{ d(\mathbf{o},g(\mathbf{o}))\geq 2 \ell}} e^{-(1-\epsilon)\delta d(\mathbf{o},g(\mathbf{o})}
\asymp 
e^{2 \ell \delta\epsilon}
\sum_{\genfrac{}{}{0pt}{}{g \in G_p, }{ d(\mathbf{o},g(\mathbf{o}))\geq 2 \ell}} e^{-\delta d(\mathbf{o},g(\mathbf{o}))} .
 \]
For $\Lambda >0$, set $\cW_\Lambda := \{w \in \cW^1\with \inf_{x \in [w]} \varphi(x) \geq \Lambda\}$. 
It follows from the inducing process of $\te$, that $g_w \in G_p$ for some  $p \in P$ and each $w \in \cW_\Lambda$, if $\Lambda>0$ is sufficiently small. The above estimate then implies the following uniform Lipschitz continuity. There exists $C\geq 1$ such that for arbitrary families $\{x_w \with x  \in [w], w \in \cW_\Lambda\}$, 
\[
 \frac{\sum_{w\in \cW_\Lambda}\varphi(x_w)^{1-\epsilon} }{\sum_{w\in \cW_\Lambda}\varphi(x_w)}  - 1  \leq C\epsilon.
\]
Combining the estimate with the argument in (\ref{eq:convex_cocompact_P<0}) then gives that
 $\delta(N)<\delta(G)$.
~ \hfill $\square$ \end{proof}

\section*{Acknowledgements}
The author acknowledges partial support by \emph{Fundação para Ciência e a Tecnologia} through grant SFRH/BPD/39195/2007 and the \emph{Centro de Matemática da Universidade do Porto}. Moreover, the author is indebted to Johannes Jaerisch for pointing out a gap in the proof of Lemma \ref{lem:constant_almost_eigenfunction} in an earlier version of the paper.


\end{document}